\def\bE{\mathbb{E}}
\def\dt{\Delta t}
\newcommand{\s}{{\sigma_*}}
\newcommand{\quand}{\quad \text{and} \quad}
\newcommand{\R}{\mathbb{R}}
\newcommand{\N}{\mathbb{N}}
\newcommand{\proj}{\text{proj}}
\newcommand{\p}{\partial}
\newcommand{\bP}{{\bf P}}
\newcommand{\bI}{{\bf I}}
\newcommand{\bQ}{{\bf Q}}
\newcommand{\itc}[1]{#1}
\renewcommand{\Pr}{\operatorname{Pr}}
\def\bsigma{\boldsymbol{\sigma}}
\def\balpha{\boldsymbol{\alpha}}
\def\bbeta{\boldsymbol{\beta}}
\newtheorem{proposition}{Proposition}
\newtheorem{theorem}{Theorem}
\newtheorem{lemma}{Lemma}
\newtheorem{remark}{Remark}
\newtheorem{definition}{Definition}
\title{Modeling Information Flow with a Multi-Stage Queuing Model
\thanks{This research has been supported, in part, by the NSF grant DMS-1620278 and, in part, by the U.S. Department of Energy, Office of Science, Office of Advanced Scientific Computing Research (ASCR), as part of their Applied Mathematics Research Program. Work supported by ASCR was performed at the Oak Ridge National Laboratory, which is managed by UT-Battelle, LLC under Contract No. De-AC05-00OR22725. The United States Government retains and the publisher, by accepting the article for publication, acknowledges that the United States Government retains a non-exclusive, paid-up, irrevocable, world-wide license to publish or reproduce the published form of this manuscript, or allow others to do so, for the United States Government purposes. The Department of Energy will provide public access to these results of federally sponsored research in accordance with the DOE Public Access Plan (http://energy.gov/downloads/doe-public-access-plan). }
}
\author{Mohammad Daneshvar\thanks{Dept. of Mathematics, University of Houston, Houston, TX, 77204, (mdanesh@math.uh.edu)}\quad
Richard C. Barnard\thanks{Dept. of Mathematics, Western Washington University, Bellingham, WA, 98225, (barnarr3$@$wwu.edu)} \quad
Cory Hauck\thanks{Computer Science and Mathematics Division, Oak Ridge National Laboratory, Oak Ridge, TN 37831 USA,
(hauckc$@$ornl.gov)}\quad 
Ilya Timofeyev\thanks{Dept. of Mathematics, University of Houston, Houston, TX 77204, (itimofey@central.uh.edu)}}
\date{}
\begin{document}

\maketitle

\begin{abstract}
    In this paper, we introduce a nonlinear stochastic model to describe the propagation of information inside a computer processor.
    In this model, a computational task is divided into stages, and information can 
    flow from one stage to another.
    The model is formulated as a spatially-extended, continuous-time Markov chain where space represents different stages. 
    This model is equivalent to a spatially-extended version of the M/M/s queue. The main modeling feature is the throttling function which describes the processor slowdown when the amount of information falls below a certain threshold.
    We derive the stationary distribution for this stochastic model and develop a closure for a deterministic ODE system that approximates the evolution of the mean and variance of the stochastic model. 
    We demonstrate the validity of the closure with numerical simulations.
\end{abstract}

Keywords: information flow; processor throttling; tandem M/M/s queue, moment closure, negative binomial distribution

\section{Introduction}

Spatially extended Markov chains
have been used to describe various physical systems such as car traffic models (e.g., \cite{soka06,hst2014,sunti14}), pedestrian models (e.g., \cite{ckpt2014,otw2019}), epitaxial crystal growth (e.g., \cite{sce11}), and order formation in bacterial colonies (e.g., \cite{lega13,lewe14,kotabj19}). Typically, these models are formulated as a collection of agents that move on a lattice according to local and global interaction rules modeling various physical phenomena. In this context, many transport models have been introduced and extensively analyzed, such as the Moran model \cite{moran1962} and Asymmetric Exclusion Processes \cite{spitzer1970}. 
Often one of the key questions is a derivation of
the corresponding fluid model using a mean-field approach or various other approximations (e.g., see review papers \cite{bpt12,helbing2001,css00}, and \cite{schchni10}).

In this paper, we construct and analyze a stochastic agent-based model for the flow of information inside a computer processor. We assume that a processor executes a single job that can be divided into $N$ separate approximately identical serial tasks (or stages). For instance, one can think of the time steps in a time-stepping algorithm for a dynamical system or the iterates of an iterative solver.  In this scenario, information enters the processor at a given rate and then propagates from one computational stage to another. 

At the microscopic level, a processor can be modeled by a one-dimensional discrete lattice with $N$ sites, which we refer to as stages, and then the execution of task $k$, for $1\leq k \leq N$, is modeled as the movement of data from stage $k$ to stage $k+1$. Here stage $N+1$ is considered the output stage from which the information is removed from the system. This model is a spatially extended continuous-time Markov chain equivalent to the spatially-extended M/M/s queue (see, e.g., \cite{mcbook13}). Stochasticity in the system is motivated by the fact that the execution time of a task is not deterministic due to variations in the processor due to outside effects or variations in the task itself based on the values of the data being moved.

A more realistic model would include a multi-processor parallel architecture. 
Since a single queue describes each processor, this would lead to 
many such queues coupled with each other using rules that describe how information is exchanged between processors.
A deterministic version of this model was considered in \cite{hauckdata1, hauckdata2}.
However, a stochastic analog of the parallel architecture is much more complex than a single queue, 
and we will consider it in subsequent papers.

To model the slowdown in overall processing speed due to a lack of information, we introduce a throttling function that limits the rate at which the processor operates when not enough information is available. The result is a continuous, piece-wise linear, stochastic transport model with two different operating regimes. The model is a Markov chain that can be recast as a multi-stage (tandem) queue (see e.g. \cite{bby18}), where customers have to go through all stages and each stage represents an M(t)/M(t)/s queue.
Thus, our work is closely related to the queueing theory, including non-stationary applications (see reviews \cite{omega2016a, omega2016b}).
It has been demonstrated in \cite{daneshvar2021} that approximations based on the usual mean-field approach fail to reproduce the average behavior of the stochastic model.  As an alternative, we introduce a statistical moment model for the mean and variance of the Markov chain and use a negative binomial approximation as a closure.  
The result is a system of ordinary differential equations that can be easily simulated. However, it is crucial to demonstrate that the deterministic model is well-posed; that is, one must verify that the model will produce a unique solution and that such a solution will not include unrealistic states--e.g. negative means or negative second moment. \itc{Therefore, the main emphasis of the analysis in Section \ref{sec3.2} is to show that the ODE model provides a single, physically realizable solution for any initial state.
In particular, the main analytical results are given in Theorems \ref{th1}
(existence and uniqueness of solutions)
and \ref{th2}
(invariance of the domain).}
We also demonstrate numerically that the resulting system of ordinary differential equations adequately reproduces the statistical behavior of the stochastic model.

The novel contribution of this paper consists of considering a spatially-extended system (tandem queue) where each stage is an M/M/s queue and deriving a deterministic closure for this model. Following the work of Rothkopf and Oren \cite{rothkopforen1979} for a single M/M/s queue, we develop a closure for the spatially-extended version. In addition, in our model, the number of servers, $s$, can depend on the stage. We also address the well-posedness of the deterministic system and prove the domain invariance, which guarantees physically relevant solutions. The deterministic system derived here can be used to efficiently compute the mean waiting time and the mean response time for the general
multi-stage queue with a varying (with respect to the stage) number of servers, $s$.

The remainder of the paper is organized as follows.  In \Cref{sec2}, we introduce the model, formulate it as a continuous time Markov chain, and then find its stationary distribution.  In \Cref{sec3}, we introduce the closure used to derive an approximate ODE system for the mean and variance at each stage of the Markov chain.  In \Cref{sec4}, we present the natural extension of these results to general stages with a stage-dependent thresholding parameter.  In \Cref{sec5}, numerical results are presented to demonstrate the ability of the ODE system to accurately capture the mean and variance of the stochastic model.  In \Cref{sec6}, we make conclusions and discuss future work.  A short Appendix provides the details of a calculation needed for one of the proofs in \Cref{sec3}.

\section{Model for the Flow of Information}
\label{sec2}

We model the information in a processor as a homogeneous continuous-time Markov chain with $N \geq 1$ stages and let $\sigma_{k}(t) \in \N = \{0,1,2,\ldots\}$ be the units of data at stage $k \in \{1,\dots, N\}$ of the processor at time $t$. We assume that data is neither created nor destroyed, only moved in and out of the processor or between the stages. The eventual goal is to obtain a macroscopic description to estimate the mean 
$\mathbb{E}[ \sigma_{k}(t) ]$.

Let $\Sigma ={\N}^{N}$ be the state space of the Markov chain, and denote the configuration of the chain at time $t$ by
$\bsigma(t)=( \sigma_{1}(t), \sigma_{2}(t),\ldots,\sigma_{N}(t))$.
We introduce transition probabilities to describe information flow from one stage to another.  
We assume that one unit of data at any stage is processed and then moves to the 
next stage independently from other units of data in the processor, whether at the same stage or at a different stage. For $i \in \{0, \dots ,N\}$, let $T_i \colon \N^N \to \N^N$ be the operator that models the propagation of one unit of data from stage $i$ to stage $i + 1$, where stage $0$ is the input and stage $N+1$ is the output.  More specifically, for each $j \in \{1, \dots ,N\}$, let
\begin{align}
\label{eq:T}
( T_i (\balpha) )_j  &
= \begin{cases}
  \alpha_j - 1, &\quad j = i,  \\
  \alpha_j + 1, &\quad j = i+1 \\
  \alpha_j, &\quad \text{otherwise} .
 \end{cases} 
\end{align}   
Then the stochastic dynamics of the data flow model are determined by the following probabilities:
\begin{subequations}
\label{eq:jump_prob}
\begin{numcases}{\Pr \left[\bsigma(t+\dt)= T_k \balpha \, | \,
\bsigma(t) =\balpha \right] = }
\label{eq:1} c_{0}(t) \dt + \mathcal{O}(\dt^2), & $\quad k=0$,
\\
\label{eq:2}f(\alpha_{k})\dt + \mathcal{O}(\dt^2), & $\quad k=1,\ldots,N-1$, 
\\
\label{eq:2a}f(\alpha_N) \dt + \mathcal{O}(\dt^2), & $\quad k=N$. 
\end{numcases}
\end{subequations}
Because the moves are independent, the probability of multiple moves in the time interval $\dt$ is  $\mathcal{O}(\dt^{2})$. 
The system in \eqref{eq:jump_prob} is a standard continuous-time Markov chain corresponding to uni-directional transport.
Equations \eqref{eq:1} and \eqref{eq:2a} describe the input and output of information
to and from the processor, respectively, while Equation \eqref{eq:2} describes the flow of information between stages.
%

The input function $c_0(t)$ can be rather general since it only defines the input of information and does not affect how it is processed and moved between different stages.  While it is not necessary,  we will assume that $c_0$ does not depend on time for the analytical results in Section \ref{sec222}.
The function $f(t)$, on the other hand, is essential since it defines the transport of information and, thus, the nonlinear nature of the model. We will discuss this in the next section.

\subsection{Processor Throttling}
\label{sec2.1}

If there is sufficient information available at a given processor stage, then it will be processed at a constant rate.
However, a slowdown occurs if the amount of data available drops below a certain threshold $\sigma_{*} >0$.  In such cases, the computational unit cannot maintain its maximum rate, and the information processing speed is reduced.  This phenomenon is referred to as processor throttling. We model it using a throttling function
\begin{equation}
\label{v}
v(x)=\begin{cases} 
      0, & \quad x \leq 0, \\
      \frac{x}{\sigma_{*}}, & \quad 0\leq x \leq \sigma_{*},\\
      1, & \quad x \geq \sigma_{*},
   \end{cases}
\end{equation}
where the throttling parameter $\sigma_{*} \geq 1$ is an integer value. 
\itc{Figure \ref{fig:2} depicts an example 
of the throttling function $v$.}
Then the rate of information transfer between stages becomes 
\begin{equation}
\label{f}
    f(\alpha_k) = c\,v(\alpha_k), \quad  k=1,\ldots,N-1,
\end{equation}
where $c$ is a constant that defines 
the maximum rate at which information can be processed.  The function $f$ is a monotonically decreasing function of $\sigma_*$; thus the rate decreases when $\sigma_*$ increases.  This fact will be reflected in the behavior of the numerical results in Section \eqref{sec5}. 
\begin{figure}[h]
\centering
\includegraphics[scale=0.6]{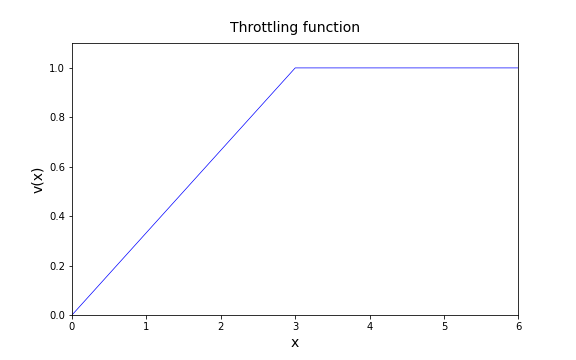}
\caption{Throttling function $v$, defined in \eqref{v}, with $\sigma_*=3$. A slowdown in the processing rate occurs when $x < \sigma_*$.}
\label{fig:2}
\end{figure}


\subsection{Stochastic Model as a Continuous-Time Markov Chain}
\label{sec2.2}

The stochastic model defined in \ref{sec2} with the throttling function
\eqref{f} constitutes a continuous-time Markov chain $\bsigma(t)$ with values on $\N^N$.   For completeness, we briefly summarize basic definitions and results from the theory of continuous-time Markov chains that will be used below.

\subsubsection{Theory of continuous-time Markov chains}
Here we follow the discussion in \cite{grimstir92,norris97}. 
While the results are formulated for scalar-valued Markov chains,
they can easily be mapped to vector-valued chains like those introduced in Section
\ref{sec2} by reindexing. First, we introduce the notion of the stochastic semigroup.
\begin{definition} The transition probability of a Markov chain $X_t$ is given by
\begin{equation}
    p_{ij}(s,t) = \Pr(X_t = j|X_s = i), \quad \text{for} \quad s \le t.
\end{equation}
The chain is called homogeneous if
$p_{ij}(s,t) = p_{ij}(0,t-s)$
for all $s \le t$.
\end{definition}
It is easy to see that the Markov chain $\bsigma(t)$ is homogeneous if the input rate $c_0$ does not depend on time.
For homogeneous chains we denote $p_{ij}(t) := p_{ij}(0,t)  $ and write $\bP_t$   for the (possibly infinite) matrix with entries $p_{ij}(t)$. 
\begin{theorem}
\cite[Theorem 6.9.3]{grimstir92}.
    The family $\{\bP_t: t \ge 0\}$ is a stochastic semigroup; i.e., it satisfies 
    \begin{enumerate}[label=(\roman*)]
        \item  $\bP_0 = \bI$ (identity);
    \item  $\bP_t$ is stochastic, i.e., $\bP_t$ has non-negative entries and each row sums to 1;
    \item  $\bP_t$ satisfies the Chapman-Kolmogorov equation
    $\bP_{s+t} = \bP_s \bP_t$ for all $s,t \ge 0$.
    \end{enumerate}
   
\end{theorem}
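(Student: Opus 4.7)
The plan is to verify the three claimed properties directly from the definition $p_{ij}(t) = \Pr(X_t = j \mid X_0 = i)$, relying only on the axioms of conditional probability, the Markov property, and time homogeneity. Property (i) is essentially a tautology: at $t = 0$ we have $\Pr(X_0 = j \mid X_0 = i) = \delta_{ij}$, so $\bP_0 = \bI$. I would dispense with it in one line.

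For property (ii), non-negativity is immediate because each $p_{ij}(t)$ is a probability. The row-sum condition follows from the law of total probability applied to the conditional distribution of $X_t$ given $X_0 = i$: the events $\{X_t = j\}$ for $j$ ranging over the (at most countable) state space are disjoint and exhaust the sample space, so $\sum_j p_{ij}(t) = \Pr\bigl(\bigcup_j \{X_t = j\} \bigm| X_0 = i\bigr) = 1$. I would state this briefly, noting that countable additivity of the conditional probability is what licenses the infinite sum.

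The substantive step is property (iii). The plan is to compute $p_{ij}(s+t)$ by conditioning on the intermediate state $X_s$. Writing
\begin{equation*}
p_{ij}(s+t) = \Pr(X_{s+t} = j \mid X_0 = i) = \sum_{k} \Pr(X_{s+t} = j,\, X_s = k \mid X_0 = i),
\end{equation*}
I would apply the chain rule for conditional probabilities to get
\begin{equation*}
\Pr(X_{s+t} = j,\, X_s = k \mid X_0 = i) = \Pr(X_{s+t} = j \mid X_s = k,\, X_0 = i)\,\Pr(X_s = k \mid X_0 = i),
\end{equation*}
then invoke the Markov property to drop the conditioning on $X_0 = i$ in the first factor, obtaining $p_{kj}(s, s+t)$. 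Finally, homogeneity gives $p_{kj}(s, s+t) = p_{kj}(t)$, so the right-hand side becomes $\sum_k p_{ik}(s)\,p_{kj}(t)$, which is precisely the $(i,j)$ entry of the matrix product $\bP_s \bP_t$. The only point to handle with care is the convergence of the sum when the state space is countably infinite, but since each term is non-negative and the partial sums are bounded by $1$, Tonelli-type reasoning (or just monotone convergence for series) makes the rearrangement legitimate without further work. I do not expect any serious obstacle; the entire proof is a direct unpacking of definitions, with the Markov property doing all the real work in step (iii).
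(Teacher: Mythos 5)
Your proof is correct: (i) and (ii) follow directly from the definition of conditional probability and countable additivity, and your derivation of (iii) by conditioning on the intermediate state $X_s$, applying the Markov property, and then homogeneity is the standard argument, with the convergence of the infinite sum correctly justified by non-negativity of the terms. The paper itself offers no proof of this statement --- it is quoted verbatim as Theorem 6.9.3 of Grimmett and Stirzaker --- so there is nothing to compare against beyond noting that your argument is exactly the textbook one being cited.
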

The next few definitions and theorems establish the basic properties of continuous-time Markov chains.
\begin{definition}
The semigroup $\{\bP_t\}$ is called standard if 
$\bP_t \to \bI$ as $t\to 0$. It is called uniform if 
$
\bP_t \to \bI$ uniformly (with respect to the elements) as $t\to 0.  
$
\end{definition}
\begin{remark}
If $\{\bP_t\}$ is uniform, then it is also standard.
\end{remark}
\begin{definition}
The generator $\bQ$ of the Markov chain is given by
\begin{equation}
\label{Qdef}
   \bQ = \lim_{h\to 0} \frac{\bP_h - \bI}{h}. 
\end{equation}
The elements $\bQ$ are denoted by $q_{ij}$.
\end{definition}
Transition probabilities can be expressed as $\bP_t = e^{t \bQ}$ and
$\bP_h = \bI + h \bQ + o(h)$ for small $h$. Therefore, 
it is easy to show that the non-diagonal entries of the generator correspond to jump rates, i.e., $p_{ij}(h) \approx q_{ij} h$ for $i \ne j$, and that each row of the generator sums to 0. Thus, the diagonal entries of the generator are negative.
\begin{theorem} 
\label{thm2}\cite[Theorem 6.10.5]{grimstir92}.
$\{\bP_t\}$ is uniform if and only if the diagonal entries of the generator are uniformly bounded, i.e.,
\begin{equation}
\sup_i |q_{ii}| < \infty.
\end{equation}
\end{theorem}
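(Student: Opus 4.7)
The plan is to prove the biconditional by treating each implication separately, with the easier direction dispatched first.

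For the ($\Leftarrow$) direction, assume $M := \sup_i |q_{ii}| < \infty$. Each row of $\bQ$ sums to zero and the off-diagonal entries are non-negative (they are jump rates), so the $\ell^\infty$-induced matrix norm satisfies $\|\bQ\|_\infty = \sup_i \sum_j |q_{ij}| = 2\sup_i |q_{ii}| \le 2M$. Using $\bP_t = e^{t\bQ}$ and the exponential series, I would then estimate
\[
\|\bP_t - \bI\|_\infty \;\le\; \sum_{k \ge 1} \frac{(t\|\bQ\|_\infty)^k}{k!} \;=\; e^{t\|\bQ\|_\infty} - 1,
\]
which tends to $0$ as $t \to 0$, establishing uniform convergence of $\bP_t$ to $\bI$.

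For the ($\Rightarrow$) direction, assume $\{\bP_t\}$ is uniform. Then $\sup_i (1 - p_{ii}(t)) \to 0$ as $t \to 0$, so I can pick $t_0 > 0$ with $p_{ii}(t_0) \ge 1/2$ for every state $i$. The key inequality I would invoke is the submultiplicativity of the diagonal transition probabilities, $p_{ii}(s+t) \ge p_{ii}(s)\,p_{ii}(t)$, which is a consequence of the Chapman-Kolmogorov equation (one only sums over paths that return to $i$ at the intermediate time). Iterating yields $p_{ii}(t_0) \ge p_{ii}(t_0/n)^n$ for every $n \ge 1$, so $p_{ii}(t_0/n) \ge 2^{-1/n}$. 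Using the elementary bound $1 - 2^{-1/n} = 1 - e^{-(\log 2)/n} \le (\log 2)/n$, this gives
\[
\frac{1 - p_{ii}(t_0/n)}{t_0/n} \;\le\; \frac{\log 2}{t_0}.
\]
Since $|q_{ii}| = \lim_{h \to 0^+} (1 - p_{ii}(h))/h$ by \eqref{Qdef}, sending $n \to \infty$ produces $|q_{ii}| \le (\log 2)/t_0$, and since $t_0$ was chosen independent of $i$, the bound is uniform.

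The main obstacle is the forward direction: uniform closeness of $\bP_t$ to $\bI$ only asserts $|p_{ii}(t) - 1| = o(1)$ uniformly, whereas a uniform bound on $|q_{ii}|$ requires $1 - p_{ii}(h) = O(h)$ uniformly, a quantitatively stronger statement. The Chapman-Kolmogorov submultiplicativity trick is the bridge that upgrades a single uniform pointwise bound at $t_0$ into the needed linear-in-$h$ control as $h \to 0$. Everything else, including the existence of the generator limit defining $q_{ii}$ and continuity of $p_{ii}$, is standard continuous-time Markov chain theory, so no machinery beyond \eqref{Qdef} and the Chapman-Kolmogorov equation is required.
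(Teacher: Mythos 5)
The paper does not actually prove this statement---it is quoted directly from \cite[Theorem 6.10.5]{grimstir92}---so there is no internal argument to compare against; your proof is correct and is essentially the standard textbook proof of that result (Chapman--Kolmogorov submultiplicativity $p_{ii}(s+t)\ge p_{ii}(s)p_{ii}(t)$ to upgrade a uniform bound at one fixed $t_0$ into uniform $O(h)$ control of $1-p_{ii}(h)$, and the exponential series for the converse). The one point to treat with care is the ($\Leftarrow$) direction: for a general standard semigroup the identity $\bP_t=e^{t\bQ}$ is not available a priori, so the rigorous route is to use $\|\bQ\|_\infty\le 2M$ to define $e^{t\bQ}$ by its norm-convergent series, verify that this is a stochastic semigroup satisfying the backward equation with generator $\bQ$, and then identify it with $\bP_t$; after that identification your estimate $\|\bP_t-\bI\|_\infty\le e^{t\|\bQ\|_\infty}-1$ applies verbatim.
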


Next, we introduce the classification of states and the notion of an irreducible Markov chain.
\begin{definition} 
\label{def: irreducible} Let $i$ and $j$ be two possible states of a Markov Chain $X_t$.  We say that
$i$ leads to $j$ (written $i \to j$) if $\Pr(X_t = i | X_0 = j) > 0$ for some $t \ge 0$.  We say that $i$ communicates with $j$ (written $i \leftrightarrow j$) if both $i \to j$ and $i \to j$.  A Markov chain that consists of a single communicating class is called irreducible. 
\end{definition}
%
%


Finally, we state the notion of the stationary distribution and the uniqueness theorem. 
\begin{definition} The vector $\pi$ is called a stationary distribution if $\pi_j \ge 0$, $\sum_j \pi_j = 1$, and $\pi = \pi \bP_t$ for all $t\ge 0$.
\end{definition}
\begin{theorem}
\cite[Section 6.9]{grimstir92}
$\pi = \pi \bP_t$ if and only if $\pi \bQ = 0$.  
\end{theorem}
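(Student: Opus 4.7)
My plan is to prove both directions by differentiating at $t = 0$ and by exponentiating, with care about interchanging limits against the (countably infinite) state space $\N^N$.

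For the forward direction, assume $\pi = \pi \bP_t$ for all $t \ge 0$. I would subtract $\pi$ from both sides, divide by $h > 0$, and take $h \to 0^+$:
\begin{equation*}
0 = \lim_{h \to 0^+} \pi \cdot \frac{\bP_h - \bI}{h}.
\end{equation*}
The plan is to push the limit inside $\pi$ (a convergent sum with weights summing to one) to obtain $\pi \bQ = 0$. This is the place where the main care is needed: the state space is infinite, so componentwise convergence $(\bP_h - \bI)/h \to \bQ$ is not enough to move a limit through an infinite linear combination. However, in the setting of \Cref{sec2} the throttling function $v$ is bounded by $1$ and $c_0$ is constant, so every diagonal entry of $\bQ$ satisfies $|q_{ii}| \le c_0 + Nc$, and in particular $\sup_i |q_{ii}| < \infty$. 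By Theorem \ref{thm2}, $\{\bP_t\}$ is then uniform, so $(\bP_h - \bI)/h$ converges to $\bQ$ uniformly in the matrix entries, which is what is needed to exchange the limit with the $\pi$-sum by a standard dominated-convergence argument (here $\pi$ plays the role of the dominating $\ell^1$ weight).

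For the reverse direction, assume $\pi \bQ = 0$. Then by induction $\pi \bQ^n = 0$ for all $n \ge 1$. Using $\bP_t = e^{t \bQ} = \bI + \sum_{n \ge 1} \frac{t^n}{n!} \bQ^n$, I would write formally
\begin{equation*}
\pi \bP_t = \pi \bI + \sum_{n \ge 1} \frac{t^n}{n!} (\pi \bQ^n) = \pi + 0 = \pi.
\end{equation*}
Again, the legitimacy of interchanging $\pi$ with the infinite series must be justified. Uniformity of the semigroup (Theorem \ref{thm2}) ensures that the series for $e^{t \bQ}$ converges in the uniform operator norm on $\ell^\infty$, so the partial sums $\sum_{n=0}^{M} \frac{t^n}{n!} \bQ^n$ satisfy $\pi \cdot (\text{partial sum}) \to \pi \bP_t$ in $\ell^1$. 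Since each partial sum already equals $\pi$ after the $n=0$ term (all later contributions are zero), the limit is $\pi$, as desired.

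The main obstacle is therefore not algebraic but analytic: rigorously justifying the limit/sum interchange against a potentially infinite state space. Both directions reduce to the same uniformity argument, which is supplied here by \Cref{thm2} together with the explicit bound $|q_{ii}| \le c_0 + Nc$ coming from \eqref{v} and \eqref{f}. Once that interchange is admitted, the algebra is one line in each direction. I would close by noting that in the finite-state restrictions considered later in \Cref{sec3,sec4} the argument is entirely elementary and no uniformity step is required.
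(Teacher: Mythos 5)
The paper does not prove this statement at all --- it is quoted verbatim from Grimmett and Stirzaker (Section 6.9) as background --- so there is no internal proof to compare against; what you have written is the standard textbook argument, and it is essentially correct. Differentiating $\pi\bP_t=\pi$ at $t=0$ gives one direction, and $\pi\bQ^n=0$ together with $\bP_t=e^{t\bQ}$ gives the other, with both limit/sum interchanges licensed by uniformity of the semigroup. The one point worth flagging is that the theorem as stated carries no hypotheses, whereas your argument genuinely needs more than a standard semigroup: for a merely standard semigroup, entrywise convergence of $(\bP_h-\bI)/h$ to $\bQ$ does not by itself permit pulling the limit through the $\ell^1$ sum, and the exponential series need not converge in operator norm. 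You close this correctly by noting that the chain of Section~\ref{sec2} has uniformly bounded diagonal rates (your bound $|q_{\balpha,\balpha}|\le c_0+Nc$ matches the paper's $(N+1)c$ from Lemma~\ref{lem:semigroup} under $c_0\le c$), so Theorem~\ref{thm2} applies and the semigroup is uniform; hence your proof covers every use the paper actually makes of the result. Two minor tightenings: the ``dominated convergence'' step in the forward direction is just the bound $\bigl|\sum_i\pi_i a_{ij}\bigr|\le\sup_{i,j}|a_{ij}|$ since $\sum_i\pi_i=1$, which is cleaner to say directly; and for the reverse direction it helps to observe that $\sum_{\bbeta}|q_{\balpha,\bbeta}|=2|q_{\balpha,\balpha}|$ is uniformly bounded, so $\bQ$ acts as a bounded operator on $\ell^1$ row vectors, which simultaneously justifies the term-by-term identity $\pi e^{t\bQ}=\sum_{n\ge0}\frac{t^n}{n!}\,\pi\bQ^n$ and the associativity needed for $\pi\bQ^n=(\pi\bQ)\bQ^{n-1}=0$.
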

\begin{theorem} 
\label{thm4}
\cite[Theorem 6.9.21]{grimstir92}.
Let $X_t$ be irreducible with a standard semigroup 
$\{\bP_t\}$ of transition probabilities. If there exists a stationary distribution $\pi$, then it is unique and $p_{ij}(t) \to \pi_j$ as $t\to\infty$ for all $i$ and $j$.
\end{theorem}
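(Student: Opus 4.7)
The plan is to reduce the continuous-time convergence problem to a discrete-time convergence problem via the skeleton chain $Y_n := X_{nh}$ for a fixed time step $h>0$, and then use a coupling argument to upgrade convergence of the skeleton to convergence in continuous time. Uniqueness is easier and I would dispatch it first, since any stationary $\pi$ of the semigroup is a left eigenvector of $\bP_h$, and an irreducible positive recurrent discrete chain has at most one stationary distribution.

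\textbf{Step 1 (Uniqueness).} Suppose $\pi$ and $\pi'$ are both stationary for $\{\bP_t\}$. Then $\pi \bP_h = \pi$ and $\pi' \bP_h = \pi'$. I would first verify that irreducibility of $X_t$ in the sense of \autoref{def: irreducible}, together with standardness of $\{\bP_t\}$, implies that the skeleton $Y_n$ with kernel $\bP_h$ is irreducible in the discrete sense; the key observation is that if $p_{ij}(t_0) > 0$ for some $t_0$, then by continuity of $t \mapsto p_{ij}(t)$ at $t_0$ (a consequence of standardness and the Chapman-Kolmogorov equation) one has $p_{ij}(t) > 0$ on a whole interval, so some integer multiple of $h$ lies in that interval. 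Uniqueness of $\pi$ then follows from the classical discrete-time result.

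\textbf{Step 2 (Convergence for the skeleton).} I would next show the skeleton $Y_n$ is aperiodic. Here the continuous-time structure helps: because $p_{ii}(t) \to 1$ as $t \to 0$ by standardness, we have $p_{ii}(h) > 0$ for small enough $h$, so every state has a self-loop in the skeleton and period $1$. Combined with irreducibility and the existence of $\pi$ (which gives positive recurrence), the classical discrete-time convergence theorem yields $\bP_h^n \to \mathbf{1} \pi$ entrywise, i.e. $p_{ij}(nh) \to \pi_j$.

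\textbf{Step 3 (Extending to continuous $t$).} The final step, which I expect to be the main obstacle, is upgrading convergence along the lattice $\{nh\}$ to convergence along all real $t \to \infty$. The cleanest route is a coupling argument: run two independent copies of the chain, $X_t$ started from $i$ and $\tilde X_t$ started from $\pi$, and show they meet almost surely in finite time; once coupled (using a joined chain after the meeting time), $|p_{ij}(t) - \pi_j| \le 2\Pr(\text{no coupling by time }t)$, and the skeleton convergence from Step 2 together with irreducibility forces the coupling time to be a.s.\ finite. Alternatively, one can sandwich $t \in [nh,(n+1)h]$ using the Chapman-Kolmogorov identity $p_{ij}(t) = \sum_k p_{ik}(nh) p_{kj}(t-nh)$, apply Fatou / dominated convergence, and use uniform continuity of $s \mapsto p_{\cdot j}(s)$ on $[0,h]$ from standardness. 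The subtle point in either route is handling the possibly countably infinite state space (which is our setting, since $\Sigma = \N^N$): the dominated convergence step requires summability, which in turn uses $\sum_j \pi_j = 1$ and the stochasticity of each row.
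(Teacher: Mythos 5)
First, a point of comparison: the paper does not prove this theorem at all --- it is quoted from Grimmett and Stirzaker as background material, so there is no in-paper argument to measure yours against. Judged on its own, your skeleton-chain strategy is the standard route and is essentially sound: uniqueness via the discrete-time theory applied to $\bP_h$, aperiodicity from $p_{ii}(h)>0$, convergence along the lattice $\{nh\}$, and an interpolation step. For Step 3 the cleanest finish is to note that pointwise convergence of the probability vectors $\bigl(p_{ik}(nh)\bigr)_k$ to $\pi$ upgrades to total-variation ($\ell^1$) convergence by Scheff\'e's lemma, after which $|p_{ij}(t)-\pi_j| = |\sum_k (p_{ik}(nh)-\pi_k)\,p_{kj}(t-nh)| \le \sum_k|p_{ik}(nh)-\pi_k| \to 0$ uniformly in the remainder $t-nh\in[0,h)$; no continuity in $s$ is needed there, and the summability worry you raise disappears.

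There is, however, one genuine flaw in Step 1 as written. From $p_{ij}(t_0)>0$ and continuity you conclude that $p_{ij}$ is positive on an interval around $t_0$ and that ``some integer multiple of $h$ lies in that interval.'' That does not follow: the interval may have length less than $h$ and sit strictly between consecutive multiples (e.g.\ $h=1$, $t_0=1/2$), and since the interval's length depends on the pair $(i,j)$ you cannot choose a single $h$ that works for every pair. The standard repair is the L\'evy-dichotomy argument: Chapman--Kolmogorov gives $p_{jj}(s)\ge p_{jj}(s/n)^n>0$ for every $s\ge0$ (take $n$ large and use standardness), hence $p_{ij}(t)\ge p_{ij}(t_0)\,p_{jj}(t-t_0)>0$ for all $t\ge t_0$. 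Thus $p_{ij}(nh)>0$ for all sufficiently large $n$, so the skeleton is irreducible (and aperiodic) for every $h>0$, and with this substitution the rest of your proof goes through.
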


\subsubsection{Analysis of the Markov chain}
\label{sec222}
In this section, we assume that the input rate $c_0$ (see \eqref{eq:1}) (i) does not depend on time and (ii) is bounded above by the maximum processor rate $c$ introduced in Section \ref{sec2.1}.  These assumptions are sufficient to ensure the existence of a unique stationary distribution and to derive an explicit expression for that distribution.  
However, the modeling and simulations results in Sections \ref{sec3.2} and \ref{sec5} allow for the more general case that $c_0$ is time-dependent.

The stochastic process $\sigma_1(t)$
can be analyzed separately from the rest of the stages since it depends only on the input function $c_0$ and is therefore decoupled from the remaining stages of $\bsigma(t)$.
Therefore, to develop an intuition about the behavior of the Markov chain $\bsigma(t)$
we briefly consider $\sigma_1(t)$ separately.
The infinitesimal generator for $\sigma_1(t)$
$\mathbf{Q}$ is given by
\begin{equation}
\begin{split}  
\mathbf{Q} &= \begin{bmatrix}
-c_0 & c_0& 0 &0& \dots\\
c\,v(1)&-c_0-c\,v(1)&c_0&0&\dots\\
0& c\,v(2)&-c_0- c\,v(2)&c_0&\dots\\
0&0& c\,v(3)&-c_0- c\,v(3)&\dots \\
\vdots&\vdots&\vdots&\vdots&\vdots
\end{bmatrix}.
\end{split}
\label{Q}
\end{equation}
The stochastic process $\sigma_1(t)$ can be reformulated as a birth-death process with birth rates $\lambda_k \equiv q_{k,k+1} = c_0$ and death rates $\mu_k \equiv q_{k,k-1} = cv(k)$.
From the formula for $v$ in \eqref{v} and under the assumption $c_0<c$, it follows that
\begin{equation}
\label{eq:sum_ratio}
\sum\limits_{n=0}^{\infty} \frac{\lambda_0\lambda_1 \cdots \lambda_{n-1}}{\mu_1\mu_2 \cdots \mu_{n}} 
= 1 + \sum\limits_{n=1}^{\sigma_*-1} 
\frac{c_0^n }{c^n } \frac{\sigma_*^n}{n!} ++
\sum\limits_{n=\sigma_*}^{\infty} 
\frac{c_0^n}{c^n}
< \infty,
\end{equation}
%
where the term above corresponding to $n=0$ is understood as 1.
The condition \eqref{eq:sum_ratio} is sufficient (see Section 6.11 of \cite{grimstir92})
for $\sigma_1(t)$ to have unique stationary distribution $\pi$.  It is straightforward to verify that $\pi$ given by
\begin{subequations}
\label{eq:pi}
\begin{align}
\pi_{0} = \lim_{t \to \infty} \Pr[\sigma_1(t) = 0] = & \left( {1+\sum_{j=1}^{\infty}\frac{ c_0^{j}}{c^{j} \prod_{k=1}^{j}v(k) }} \right)^{-1},
\label{pi0} \\
\pi_{j} = \lim_{t \to \infty} \Pr[\sigma_1(t) = j] = & \frac{ c_0^{j}}{c^{j} \prod_{k=1}^{j}v(k) }
\left( {1+\sum_{j=1}^{\infty}\frac{ c_0^{j}}{c^{j} \prod_{k=1}^{j}v(k) }}\right)^{-1}, \qquad j\geq 1,
\label{pik}
\end{align}
\end{subequations}
satisfies $\pi \mathbf{Q} = 0$.

Next, we consider the full Markov chain $\bsigma(t)$ and discuss its basic properties.

\begin{lemma}
\label{lem:irreducible}
The continuous time Markov chain $\bsigma(t)$ is irreducible. 
\end{lemma}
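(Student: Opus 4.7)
The plan is to verify irreducibility directly from \Cref{def: irreducible} by proving that every state communicates with the all-zeros configuration $\mathbf{0} = (0,\ldots,0) \in \N^N$. Since communication is an equivalence relation, this will immediately give $\balpha \leftrightarrow \bbeta$ via $\balpha \leftrightarrow \mathbf{0} \leftrightarrow \bbeta$ for arbitrary $\balpha,\bbeta \in \N^N$. Positivity of a transition probability can be certified by exhibiting a finite sequence of admissible single-step jumps from the source state to the target state such that each jump has a strictly positive rate at the configuration where it is applied; this guarantees that the embedded jump chain follows the constructed path with positive probability, and hence so does $\bsigma(t)$ at some $t>0$.

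For the direction $\balpha \to \mathbf{0}$, I would drain the processor from the back. Apply $T_N$ repeatedly to eject the $\alpha_N$ units sitting at stage $N$; then for $k = N-1, N-2, \ldots, 1$ in turn, push each unit currently at stage $k$ forward by $T_k, T_{k+1}, \ldots, T_{N-1}$ and out via $T_N$, until stage $k$ is empty. Every transition $T_k$ used in this sequence acts at a configuration $\bgamma$ with $\gamma_k \geq 1$, so by \eqref{v}--\eqref{f} its rate $f(\gamma_k) = c\,v(\gamma_k)$ is strictly positive. Hence the constructed trajectory has positive probability and $\balpha \to \mathbf{0}$.

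Conversely, to obtain $\mathbf{0} \to \bbeta$ I would fill the processor from back to front. Since $c_0 > 0$, the input transition $T_0$ has positive rate at every configuration. Starting from $\mathbf{0}$, I insert a unit via $T_0$ and push it forward by $T_1, T_2, \ldots, T_{N-1}$ into stage $N$; repeating this $\beta_N$ times produces the state $(0,\ldots,0,\beta_N)$. Next I insert and push units up to stage $N-1$ exactly $\beta_{N-1}$ times, and so on, finishing by depositing $\beta_1$ units at stage $1$. Because each intermediate move $T_j$ is executed at a configuration with $\gamma_j \geq 1$, it has positive rate, and the back-to-front ordering ensures that units already placed at deeper stages are never disturbed. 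Combining both directions yields $\balpha \leftrightarrow \bbeta$ and hence irreducibility. The argument is essentially bookkeeping rather than analysis; the only point requiring care is to order the moves so that no step attempts to advance data from an empty stage, which the draining and filling schemes handle automatically.
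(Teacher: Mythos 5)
Your proof is correct, and it organizes the path construction differently from the paper. The paper proves $\balpha \to \bbeta$ directly by fixing one coordinate at a time: it introduces the maps $G^{(i)}_{\bbeta}$ that overwrite the $i$-th entry, realizes each $G^{(i)}_{\bbeta}$ as a composition of powers of the $T_j$ (flushing excess units out through stages $i,\dots,N$ when $\alpha_i > \beta_i$, or injecting and forwarding units through stages $0,\dots,i-1$ when $\alpha_i < \beta_i$), and then composes $G^{(N)}_{\bbeta},\dots,G^{(1)}_{\bbeta}$. You instead route every state through the empty configuration $\mathbf{0}$ and invoke the fact that communication is an equivalence relation. The two decompositions buy slightly different things: the paper's direct path is shorter and never empties the processor, but requires a case split on the sign of $\alpha_i-\beta_i$ and a little care that the intermediate $G^{(j)}$ applications do not disturb already-corrected coordinates; your drain-then-fill scheme avoids the case split entirely and makes the positivity of every jump rate transparent (each $T_k$ is only ever applied at a configuration with $\gamma_k\geq 1$, where $f(\gamma_k)=c\,v(\gamma_k)>0$), at the cost of a longer path and an appeal to transitivity. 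One small point in your favor: you state explicitly that $c_0>0$ is needed for the filling direction; the paper's proof uses \eqref{eq:1} for the case $l_i<0$ and relies on the same assumption implicitly, since irreducibility plainly fails if $c_0=0$. Both arguments rest on the same underlying mechanism, namely that a finite chain of single-unit jumps with strictly positive rates forces $p_{\balpha\bbeta}(t)>0$ for small $t>0$.
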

\begin{proof}
Define the function $G^{(i)}_{\bbeta} \colon \N^N \to \N^N$ by
\begin{equation}
    (G^{(i)}_{\bbeta} (\balpha))_j  =
    \begin{cases}
        \beta_j , & \quad j = i, \\ 
        \alpha_j , & \quad j \ne i.
        \end{cases}
\end{equation}
According to Definition \ref{def: irreducible}, it is sufficient to show that for any $\balpha \in \N^N$ and $\bbeta \in \N^N$
\begin{equation}
\label{eq:leads}
\Pr[\bsigma(t + \Delta t) = G^{(i)}_{\bbeta}(\balpha)  \, | \, 
    \bsigma(t) = \balpha]  > 0
\end{equation}
for $\dt$ sufficiently small.  Recall the  definition of $T_i$ given in \eqref{eq:T}, and let $l_i = \alpha_i -  \beta_i$.  
\itc{
Then
\begin{subequations}
\begin{numcases}{G^{(i)}_{\bbeta} (\balpha)=}
\label{eq:l>0}
[T_N]^{l_i} 
       \circ [T_{N-1}]^{l_i}
       \dots
       \circ [T_{i+1}]^{l_i}
       \circ [T_{i}]^{l_i}
       (\balpha),
& if $l_i > 0$, \\
\label{eq:l<0}
    [T_{i-1}]^{|l_i|} \circ  [T_{i-2}]^{|l_i|} \circ \dots [T_{1}]^{|l_i|}  \circ [T_{0}]^{|l_i|} (\balpha),
& if $l_i < 0$ \\
\label{eq:l=0}
\balpha, & if $l_i = 0$.
\end{numcases}
\end{subequations}
Each of the transitions in \eqref{eq:l>0} is of the type in \eqref{eq:2} or \eqref{eq:2a}, while each of the transitions in \eqref{eq:l<0} is of the type in \eqref{eq:1} or \eqref{eq:2}.  In either case, the probability of each such transition is positive for $\Delta t$ sufficiently small. In \eqref{eq:l=0}, there is no transition, and this happens with probability $1-\mathcal{O}(\Delta t)$.  Thus \eqref{eq:leads} holds for $\Delta t$ sufficiently small.  
}
Finally, it is clear that
\begin{equation}
    \bbeta = G^{(1)}_{\bbeta} \circ G^{(2)}_{\bbeta} \circ
    \dots \circ G^{(N-1)}_{\bbeta} \circ G^{(N)}_{\bbeta} (\balpha),
\end{equation}
and therefore
\begin{equation}
    \Pr[\bsigma(t + \Delta t) = \bbeta  \, | \, 
    \bsigma(t) = \balpha]
    > 0
\end{equation}
for $\Delta t$ sufficiently small.
\end{proof}

\begin{lemma}\label{lem:semigroup}
The transition probabilities associated with $\bsigma(t)$ form a uniform semigroup. 
\begin{proof} 
According to Theorem \ref{thm2}, it is sufficient to show that diagonal entries of the generator are uniformly bounded.
Let $\mathbf{Q}$ be the generator for $\bsigma(t)$ with elements $q_{\balpha,\bbeta}$ indexed by ${\balpha,\bbeta} \in \N^N$.  According to \eqref{eq:jump_prob} and the assumption that $c_0 \leq c$, for any state $\balpha \in \N^N$, $t \geq 0$, and $\dt>0$ sufficiently small,  
\begin{subequations}
\begin{align}
        \label{eq:Ti_trans}
        \Pr[\bsigma(t + \dt) = T_i(\balpha) \,|\, \bsigma(t ) = \balpha] & \leq c \dt + \mathcal{O}(\dt^2), \qquad i \in \{0 , \dots, N\} ,\\
    \Pr[\bsigma(t + \dt) \not \in \{\balpha,T_1 \balpha, \dots, T_N \balpha\} \,|\, \bsigma(t ) = \balpha] &= \mathcal{O}(\dt^2).
\end{align}
\end{subequations}
Thus for each $\balpha$, the only transitions which lead to off-diagonal non-zero entries of $Q$ are given in \eqref{eq:Ti_trans}; there are only $N+1$ such transitions. Furthermore, by definition, $\sum_{\bbeta} q_{\balpha,\bbeta}=0$.  Hence
\begin{equation}
   |q_{\balpha,\balpha} |
   =  |\sum_{\bbeta \ne \balpha} q_{\balpha,\bbeta} | 
   \leq \sum_{\bbeta \ne \balpha} |q_{\balpha,\bbeta}| 
   \leq (N+1)c.
\end{equation}
\end{proof} 
\end{lemma}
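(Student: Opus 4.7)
The plan is to invoke \Cref{thm2}, which reduces the problem of showing that $\{\bP_t\}$ is a uniform semigroup to the purely combinatorial task of producing a uniform upper bound on the absolute values of the diagonal entries $q_{\balpha,\balpha}$ of the generator $\bQ$ associated with $\bsigma(t)$. Since the rows of $\bQ$ sum to zero (as noted in the discussion following \eqref{Qdef}), it suffices to show that for every state $\balpha \in \N^N$, the sum $\sum_{\bbeta \neq \balpha} q_{\balpha,\bbeta}$ is bounded by a constant independent of $\balpha$.

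Next I would enumerate, for a fixed source state $\balpha$, the target states $\bbeta \neq \balpha$ with $q_{\balpha,\bbeta} > 0$. By the jump probabilities \eqref{eq:jump_prob}, the only single-jump transitions out of $\balpha$ are to the $N+1$ states $T_0(\balpha), T_1(\balpha), \ldots, T_N(\balpha)$; all other transitions require at least two independent jumps within time $\dt$ and contribute only $\mathcal{O}(\dt^2)$, which vanishes in the limit defining $\bQ$ in \eqref{Qdef}. Using \eqref{eq:jump_prob} together with the bounds $c_0 \leq c$ and $f(\alpha_k) = c\, v(\alpha_k) \leq c$ — the latter coming directly from the definition \eqref{v} of the throttling function, which satisfies $0 \leq v \leq 1$ — each of these $N+1$ off-diagonal rates is at most $c$.

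Combining these observations, $|q_{\balpha,\balpha}| = \sum_{\bbeta \neq \balpha} q_{\balpha,\bbeta} \leq (N+1)c$ for every $\balpha \in \N^N$, giving the uniform bound required by \Cref{thm2}. No step here looks like a serious obstacle; the only thing to be careful about is not accidentally assuming the $q_{\balpha,\bbeta}$ are small because the individual $\alpha_k$ might be large — the key structural fact is that the rate $f(\alpha_k)$ is capped by $c$ precisely because $v$ saturates at $1$, so the throttling mechanism, introduced for modeling reasons, is exactly what provides the uniform bound needed here.
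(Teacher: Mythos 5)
Your proposal is correct and follows essentially the same route as the paper: reduce to a uniform bound on the diagonal of the generator via \Cref{thm2}, note that only the $N+1$ transitions $T_0(\balpha),\dots,T_N(\balpha)$ contribute off-diagonal rates (each at most $c$ since $c_0 \leq c$ and $v \leq 1$), and use the zero row-sum property to conclude $|q_{\balpha,\balpha}| \leq (N+1)c$. Your explicit remark that the saturation of the throttling function $v$ at $1$ is what caps $f(\alpha_k)$ by $c$ is a helpful clarification of a step the paper leaves implicit.
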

In the following theorem, we extend the result regarding the invariant distribution for the Markov chain $\bsigma(t)$.
\begin{theorem}
\label{prop1}
The Markov chain $\bsigma(t)$ has a unique
stationary distribution given by 
\begin{equation}
\pi_{\balpha} = C \,\frac{ c_0^{\alpha_1}}{c^{\alpha_1} \prod_{k=1}^{\alpha_1}v(k) } \times \frac{ c_0^{\alpha_2}}{c^{\alpha_2}  \prod_{k=1}^{\alpha_2}v(k) } \times \cdots \times \frac{ c_0^{\alpha_N}}{c^{\alpha_N} \prod_{k=1}^{\alpha_N}v(k) } 
\label{eq:25}
\end{equation}
where $C$ is a normalization constant given by
\begin{align}
C&=  
\left[ 
\sum_{\alpha_1=0}^\infty \dots \sum_{\alpha_N=0}^\infty 
\left( \frac{ c_0^{\alpha_1}}{c^{\alpha_1} \prod_{k=1}^{\alpha_1}v(k) } \times \frac{ c_0^{\alpha_2}}{c^{\alpha_2} \prod_{k=1}^{\alpha_2}v(k) } \times \cdots \times \frac{ c_0^{\alpha_N}}{c^{\alpha_N}  \prod_{k=1}^{\alpha_N}v(k) }
\right) 
\right]^{-1},
\end{align}
and $\prod_{k=1}^{\alpha_j}v(k) \equiv 1$ if $\alpha_j=0$.\\
\end{theorem}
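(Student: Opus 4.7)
The plan is to verify directly that the candidate $\pi$ in \eqref{eq:25} solves the global balance equation $\pi\bQ=0$ and is summable, then to invoke \Cref{thm4}, using that the chain is irreducible (\Cref{lem:irreducible}) and its semigroup is standard (being uniform by \Cref{lem:semigroup}), to conclude uniqueness.

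First I would write out the global balance at an arbitrary state $\balpha\in\N^N$. From \eqref{eq:jump_prob} the outgoing transitions are $\balpha\to T_i\balpha$ for $i=0,\ldots,N$ at rates $c_0,\,cv(\alpha_1),\ldots,cv(\alpha_N)$, so the total outgoing rate is $c_0+c\sum_{i=1}^N v(\alpha_i)$. The incoming transitions come from: $T_0^{-1}\balpha=(\alpha_1-1,\alpha_2,\ldots,\alpha_N)$ at rate $c_0$ (requires $\alpha_1\geq 1$); $T_i^{-1}\balpha=(\ldots,\alpha_i+1,\alpha_{i+1}-1,\ldots)$ at rate $cv(\alpha_i+1)$ for $i=1,\ldots,N-1$ (requires $\alpha_{i+1}\geq 1$); and $T_N^{-1}\balpha=(\ldots,\alpha_N+1)$ at rate $cv(\alpha_N+1)$. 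The convention $v(0)=0$ in \eqref{v} handles the outgoing indicators automatically.

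Next, I would substitute the product ansatz $\pi_{\balpha}=C\prod_{j=1}^N g(\alpha_j)$ with $g(n):=c_0^n/(c^n\prod_{k=1}^n v(k))$ and $g(0):=1$. The two ratios $g(n-1)/g(n)=cv(n)/c_0$ and $g(n+1)/g(n)=c_0/(cv(n+1))$ make the right-hand side of the balance equation telescope after division by $\pi_{\balpha}$: the service rate $cv(\alpha_i+1)$ introduced by the $T_i^{-1}$ transition cancels with the $1/v(\alpha_i+1)$ coming from $g(\alpha_i+1)/g(\alpha_i)$, while the factor $cv(\alpha_{i+1})/c_0$ from $g(\alpha_{i+1}-1)/g(\alpha_{i+1})$ combines with the $c_0$ of the transition rate to produce $cv(\alpha_{i+1})$. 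The balance collapses to the trivial identity
\begin{equation*}
c_0+c\sum_{i=1}^N v(\alpha_i)=cv(\alpha_1)+c\sum_{i=1}^{N-1} v(\alpha_{i+1})+c_0,
\end{equation*}
and summability is immediate from the factorization $\sum_{\balpha}\prod_j g(\alpha_j)=S^N$ where $S=\sum_{n\geq 0}g(n)$ is finite under $c_0<c$ by \eqref{eq:sum_ratio}. Hence $C=S^{-N}$ is well-defined, $\pi$ is a probability distribution satisfying $\pi\bQ=0$, and uniqueness together with the closed form follow from \Cref{thm4}.

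The main subtlety, rather than a serious obstacle, is the bookkeeping for boundary states with some $\alpha_i=0$: one must check that the incoming transitions removed by the constraints $\alpha_1\geq 1$ and $\alpha_{i+1}\geq 1$ correspond exactly to the outgoing transitions killed by $v(0)=0$, so that the telescoping remains valid without correction terms. Aside from this, the argument is essentially Jackson's product-form theorem adapted to a tandem network with state-dependent service rates $cv(\cdot)$.
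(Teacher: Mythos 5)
Your proposal is correct and follows essentially the same route as the paper: verify by direct substitution that the product-form ansatz satisfies the stationary (global balance / Kolmogorov forward) equation, then obtain uniqueness from irreducibility (\Cref{lem:irreducible}) and the uniform-hence-standard semigroup (\Cref{lem:semigroup}) via \Cref{thm4}. The only difference is one of detail: you actually carry out the telescoping cancellation and the boundary-state check that the paper compresses into ``it can be shown by direct substitution,'' and you make explicit the summability $\sum_{\balpha}\prod_j g(\alpha_j)=S^N<\infty$ under $c_0<c$, which the paper leaves implicit in \eqref{eq:sum_ratio}.
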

\begin{proof} 
Let $\bP_t$ be the transition matrix for $\bsigma(t)$ with elements $p_{\balpha, \bbeta}(t)
=\Pr[\bsigma(t)=\balpha| \bsigma(0)=\bbeta] $, and for $i \in \{0,\dots,N\}$, let $S_i = T_i^{-1}$, where $T_i$ is defined in \eqref{eq:T}.
Under the assumption that $\Pr[\sigma_i(t) < 0]$ is identically zero for all $i \in \{0,\dots,N\}$, the Kolmogorov forward equation for $\bP_t$ is
\begin{align}
\begin{split}
\partial_{t} p_{\balpha, \bbeta}(t)
&=c_0 \left[p_{(S_0\balpha),\bbeta}(t) -  p_{\balpha,\bbeta}(t) \right]
+ c \sum_{i=1}^{N} \left[ v(\alpha_{i}+1) p_{(S_i\balpha), \bbeta} (t)
- v(\alpha_{i})  p_{\balpha,\bbeta}(t) \right].
\end{split}
\label{eq:26}
\end{align}
%
To find the stationary solution, we use equation \eqref{eq:26} with the 
left-hand side set to zero. It can be shown by direct substitution that \eqref{eq:25} solves
the equation for the stationary distribution.

 We next prove uniqueness.  According to Lemma \ref {lem:irreducible}, $\bsigma(t)$ is irreducible.  According to Lemma \ref{lem:semigroup}, the associated transition probabilities form a uniform, and hence standard, semi-group.  Therefore $\bsigma(t)$ satisfies the conditions of Theorem \ref{thm4}, from which the uniqueness of the stationary distribution follows.  
 
\end{proof}
Because the distribution \eqref{eq:25} has a product form, it follows that when $\bsigma(t)$ is stationary, all stages are independently identically distributed. Hence, the moments of $\sigma_k(t)$ with respect to the steady state distribution are 
\begin{equation}
    \lim_{t \to \infty} \bE[(\sigma_k(t))^{n}] = 
\left(1 + \sum\limits_{\alpha_k=1}^\infty
\frac{ c_0^{\alpha_k}}{c^{\alpha_k}  \prod_{i=1}^{\alpha_k}v(i) }\right)^{-1} \sum_{\alpha_k=1}^{\infty}(\alpha_k)^{n}\,\frac{ c_0^{\alpha_k}}{c^{\alpha_k} \prod_{i=1}^{\alpha_k}v(i)}.
\label{eq:28}
\end{equation}

\subsection{Stochastic Model as a Multi-Stage Queue}
\label{queue}
The microscopic model introduced in Section \ref{sec2.1}
can be interpreted as a multi-stage M/M/s queue, and therefore it can be analyzed in the context of queuing theory \cite{adan2002queueing,bhat2015}. If we identify the computational stages in our model with queues, then
our model is equivalent to $N$ consecutive queues. The job enters the first queue, $\sigma_1$, and after it is served in $\sigma_1$, it enters the next queue, $\sigma_2$, and so on.
In particular, the first stage, $\sigma_1(t)$ is a classical M/M/s queue where
jobs arrive with the arrival rate $c_{0}(t)$ and the threshold parameter $\sigma_*$ can be interpreted as the number of servers in the queue. The utilization of the server at time $t$ is $c_{0}(t)/c$ and the average service rate is $c/\sigma_*$.


In the M/M/s interpretation, the queue has the same number of servers at each stage because the threshold parameter $\sigma_*$ is the same. However, for the numerical results in Section \ref{sec4}, we allow for a more general case in which each queue can have a different number of servers; that is, we allow $\sigma_*$ to be a function of the stage $k$.

\section{Deterministic Closure}
\label{sec3}
The goal of this section is to construct a closure to approximate the dynamics of the expected value $\bE_k(t):= \bE[\sigma_k(t)]$ for each $k \in  \{1,\dots, N\}$.
To derive equations for $\bE_k(t)$,
we use the properties of the generator and the Dynkin formula.
In particular, for any function $\phi: \mathbb{N}^N \to \mathbb{N}$, 
\begin{equation}
\frac{d}{dt} \bE [\phi(\bsigma(t))] 
= \bE [\bQ \phi(\bsigma(t))]
= \sum_{\bbeta} \Pr[\bsigma(t) = \bbeta] \sum_{\balpha} \bQ_{\bbeta,\balpha} \phi(\balpha), 
\end{equation}
and, rather than using \eqref{Qdef} directly, we compute the action of the generator $\bQ$ using the formula
\begin{equation}
[\bQ \phi(\bsigma(t))]_{\bbeta} = \lim_{h \to 0} \frac{\bE [\phi(\bsigma(h))|\bsigma(0) = \bbeta] - \phi(\bbeta)}{h} .
\end{equation}
Setting $\phi(\bsigma(t)) = \sigma_k(t)$, $k=1,\ldots,N$ yields following system of equations for $\bE_k(t)$:
    
%
\begin{subequations}
\label{eq:29}
\begin{align}
\frac{d}{dt} \bE_1(t) & = c_0(t)- c\,\bE[v(\sigma_{1} (t))], \\
\frac{d}{dt} \bE_k(t) & = c\,\bE[v(\sigma_{k-1} (t)) ]- c\,\bE[v(\sigma_{k} (t))], \qquad k=2\dots,N.
\end{align}
\end{subequations}
The equations above are exact but not closed since $\bE[v(\sigma_{k} (t))]$ is an expectation of a 
nonlinear function. 

A naive mean-field assumption holds under the ad-hoc assumption that the expectation commutes with the 
nonlinear function $v$, i.e.
$\bE[v(\sigma_k(t))] = v(\bE[\sigma_k(t)])$. With this assumption, \eqref{eq:29} 
becomes a closed systems of equations:
\begin{subequations}
\label{eq:naive}
\begin{align}
\frac{d}{dt} \bE_1(t) &= c_0(t)- c\,v(\bE[\sigma_{1} (t)]), \\
\frac{d}{dt} \bE_k(t) &= c\,v(\bE[\sigma_{k-1} (t)])- c\,v(\bE[\sigma_{k} (t)]), \qquad k=2\dots,N.
\end{align}
\end{subequations}
%
While in some parameter regimes
(e.g. $\sigma > c_0$)
this naive closure produces adequate results \cite{daneshvar2021}, the discrepancies between the deterministic system \eqref{eq:naive}
and the Monte-Carlo simulations of the stochastic system are significant when $\sigma < c_0$. An example of this behavior is provided by Figure \ref{fig:naive}, where results from the simulation of the 
stochastic system $\bsigma(t)$
and the deterministic system \eqref{eq:naive} are given. 
\itc{Numerical results for both, the stochastic and deterministic models resemble a nonlinear wave phenomenon with the leading front corresponding to information propagating through consecutive stages.}
The difference in wave propagation between the two models indicates that the naive mean-field closure \eqref{eq:naive} is not generally
applicable to the stochastic model $\bsigma(t)$. 
We perform comparison between stochastic simulations and our improved closure model in the same regime $(c,\sigma_*)=(10,3)$
in 
Section \ref{num1} and Figure \ref{fig:p1} in particular.
\begin{figure}[h]
\centering
\includegraphics[scale=0.8]{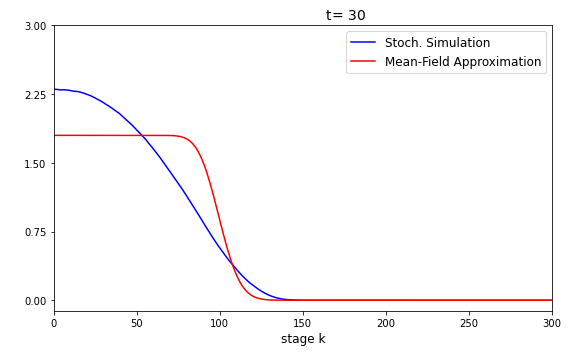}
\caption{Numerical comparison of the mean $\bE[\sigma_k(t)]$ in deterministic system \eqref{eq:naive} and Monte-Carlo simulations of the stochastic system $\bsigma(t)$
for $c=10$, $c_0=6$, $\sigma_* = 3$, $t=30$, zero initial condition. Stochastic results are averaged over 10,000 Monte-Carlo trajectories. C.f. with results in Section \ref{num1} and Figure \ref{fig:p1} in particular,
where numerical results for our improved closure model are discussed.}
\label{fig:naive}
\end{figure}

We proceed instead with a more sophisticated closure to approximate the dynamics on the right-hand side of equations \eqref{eq:29}. First, the expected value of $\sigma_k(t)$ can be rewritten as follows:
\begin{equation}
\begin{split}
\bE[v(\sigma_k(t))]
&=\sum\limits_{i=0}^\infty v(i)\Pr[\sigma_k(t)=i] 
=\frac{1}{\sigma_*}\sum\limits_{i=0}^{\sigma_*-1} i\, \Pr[\sigma_k(t)=i] + \sum\limits_{\sigma_*}^\infty \Pr[\sigma_k(t)=i]\\
     &=\frac{1}{\sigma_*} \sum\limits_{i=0}^{\sigma_*-1} i\, \Pr[\sigma_k(t)=i)]+ 
     \left(1-\sum\limits_{i=0}^{\sigma_*-1}\Pr[\sigma_k(t)=i]\right)
     =1-\frac{1}{\sigma_*} \sum\limits_{i=0}^{\sigma_*-1}(\sigma_*-i)\Pr[\sigma_k(t)=i].
\label{eq:31}
\end{split}
\end{equation}
Thus, \eqref{eq:29} can be rewritten as
\begin{subequations}
    \label{eq:32}
    \begin{align}
    \frac{d}{dt} \bE_1(t) & = c_0(t) - c + \frac{c}{\sigma_*}\, \sum\limits_{i=0}^{\sigma_*-1}(\sigma_*-i)\Pr[\sigma_1(t)=i], \\
    \frac{d}{dt} \bE_k(t)  &= \frac{c}{\sigma_*}\, \sum\limits_{i=0}^{\sigma_*-1}(\sigma_*-i)\Pr[\sigma_k(t)=i] -
    \frac{c}{\sigma_*}\, \sum\limits_{i=0}^{\sigma_*-1}(\sigma_*-i)\Pr[\sigma_{k-1}(t)=i], \qquad k=2,\dots,N.
\end{align}
\end{subequations}
An obvious advantage of the equations in \eqref{eq:32} is that they are written in terms of the first $\sigma_*$
terms of the probability distribution and, therefore, the summations are finite. Hence, to find a deterministic approximation for the right-hand side of \eqref{eq:29}, 
it is sufficient to adequately approximate
\begin{equation}
    \label{eq:34}
    F_{k}(t) = \sum\limits_{i=0}^{\sigma_*-1}(\sigma_*-i)\Pr[\sigma_{k}(t)=i].
\end{equation}
We seek a time-dependent approximation of $\Pr[\sigma_k(t)=i]$
which depends on the first two moments of $\sigma_k(t)$.

\subsection{Negative Binomial Approximation}

The negative binomial distribution \cite{hogg1977probability} is a discrete probability distribution that gives the probability of $n$ failures in a sequence of Bernoulli trials with success probability $p$ until the $r$-th success is achieved.  It is given by the probability mass function
\begin{equation}
    \label{eq:nb}
     B_n(r,p) = \frac{\Gamma(n+r)}{n!\Gamma(r)}\,p^{r}\,(1-p)^{n}, \quad\quad n=0,1,2,...
\end{equation}
where $\Gamma$ is the usual gamma function, $p \in [0,1]$, $n \in \N$, and the definition has been extended from $r \in \N_+ = \{1,2,3,\dots\}$ to all $r>0$. 
The mean $\rho$ and variance $\eta$ of a random variable with mass function $B_n$ are given by
\begin{equation}
    \rho = \frac{r(1-p)}{p} 
    \quand
    \eta =\frac{r(1-p)}{p^2}, 
\end{equation}
respectively.  These relations can be inverted for the parameters $r$ and $p$:
\begin{equation}
\label{eq:hat-p-hat-r}
    p = \hat{p}(\rho,\eta)= \frac{\rho}{\eta}
    \quand
    r = \hat{r}(\rho,\eta) = \frac{\rho^2}{\eta-\rho}.
\end{equation}
Moreover, the function
\begin{equation}
\label{eq:P}
    P_n(\rho,\eta) = B_n(\hat{r}\left(\rho,\eta),\hat{p}(\rho,\eta) \right),
\end{equation}
where $\hat{r}$ and $\hat{p}$ are defined in \eqref{eq:hat-p-hat-r}, is well-defined for all $\rho$ and $\eta$ such that  $0 < \rho < \eta$.  It can be also extended to the rays $\eta = \rho \geq 0$ and $\eta \geq \rho =0 $ as follows:
\begin{itemize}
    \item{\bf Case 1: $\eta =\rho > 0$.}  Let $(\rho,\eta) \in \mathbb{R}^2$ such that $\eta > \rho >0 $ and $(\rho,\eta) \to (\rho^*,\eta^*)$, where $\eta^* =\rho^* >0$.  Then $\hat{r}(\rho,\eta) \to \infty$.  Since $\hat{p}(\rho,\eta) = (1+ \frac{\rho}{\hat r(\rho,\eta)})^{-1}$, it follows that
    \begin{equation}
    \label{eq:nb-alt}
     P_n(\rho,\eta) = B_n(\hat{r}(\rho,\eta),\hat{p}(\rho,\eta)) = \frac{\Gamma(n+\hat r(\rho,\eta))}{\Gamma(\hat r(\rho,\eta)) (\rho+\hat r(\rho,\eta))^n} \frac{\rho^n}{n!} \left(1+ \frac{\rho}{\hat r(\rho,\eta)}\right)^{-\hat r(\rho,\eta)} 
\end{equation}
Hence \cite{piegorsch1990maximum}
\begin{equation}
\label{eq:ray1}
    \lim_{(\rho,\eta) \to (\rho^*, \eta^*) } P_n(\rho,\eta) 
    = \lim_{(\rho,r) \to (\rho^*, \infty) } \frac{\Gamma(n+r)}{\Gamma(r) (\rho+r)^n} \frac{\rho^n}{n!} \left(1+ \frac{\rho}{r}\right)^{-r} 
    = \frac{\rho_*^n}{n!} e^{-\rho_*}.
\end{equation}
 \item{\bf  Case 2: $\eta \geq \rho = 0$.} Let $(\rho,\eta) \in \mathbb{R}^2$ such that $\eta > \rho >0 $ and $(\rho,\eta) \to (\rho^*,\eta^*)$, where $\eta^* \geq \rho^* = 0$.  From the definition of the gamma function, if follows that 
 \begin{equation}
\max_{ \{r >0, \rho \in (0,1)\} } \frac{\Gamma(n+r)}{\Gamma(r)(\rho+r)^n} \left(1+ \frac{\rho}{r}\right)^{-r} \leq C_n
 \end{equation}
where $C_n$ is a finite constant that depends only on $n$.  Hence from \eqref{eq:nb-alt}, it follows that 
\begin{equation}
\label{eq:ray2}
     \lim_{\rho,\eta \to (0,\eta^*)} P_n(\rho,\eta) 
     = \begin{cases}
         1, &\quad  n=0, \\
         0, & \quad n \geq 1.
     \end{cases}
\end{equation}
%
%

\end{itemize}



When $r=1$, the negative binomial distribution reduces a geometric distribution \cite{hogg1977probability}, which is equivalent to the steady-state distribution in \eqref{eq:pi} when $\sigma_*=1$.%
\footnote{This fact can be shown by recalling that $v \equiv 1$ when $\sigma_*=1$ (corresponding to an M/M/1 queue) and setting the success probability for the geometric distribution to be $p = 1 - c_0/c$.} Motivated by this fact, the authors in 
\cite{rothkopforen1977,rothkopforen1979} use the negative binomial distribution to approximate the transient probability of an M/M/1 queue, and then extend the application to queues with more than one server. 

In the current model, the first stage is an M/M/s queue, and all stages have the same steady-state distribution.  We, therefore, take a similar approach to \cite{rothkopforen1977,rothkopforen1979} and use the negative binomial distribution to approximate the probability mass function for $\sigma_{k}(t)$ at each stage $k$:
 \begin{equation}
    \label{eq:46}
    \Pr[\sigma_{k}(t)=n] \approx P_n( \rho_k(t), \eta_k(t))
\end{equation}
where $\rho_k(t)$ and $\eta_k(t)$ are approximations for the mean $\bE_k(t)$ and variance 
\begin{equation}
    \mathbb{V}_k(t) = \bE \left[|\sigma_k(t) - \bE_k(t)|^2 \right],
\end{equation}
respectively, of $\sigma_{k}(t)$.  The exact equations for $\bE_k(t)$ are given in \eqref{eq:29}, while the exact equations for $\mathbb{V}_k$ are
\begin{subequations}
\label{eq:var}
\begin{align}
    \frac{d}{dt} \mathbb{V}_{1}(t) &= c_0(t) + c\, \bE[v(\sigma_{1}(t)] + 2\,c_0\,\rho_{1}(t) - 2\, c\, \bE[\sigma_{1}(t)v(\sigma_{1}(t))], \\
    \frac{d}{dt} \mathbb{V}_k(t) &= c\, \bE[v(\sigma_{k-1} (t))] + c\, \bE[v(\sigma_{k}(t))] + 2\, c\, \bE[\sigma_{k-1}(t)v(\sigma_{k-1}(t))]- 2\, c\, \bE[\sigma_{k}(t)v(\sigma_{k}(t))], \quad k=2,\dots,N.
\end{align}
\end{subequations}

Like the equations for $\bE_k$, the equations for $\mathbb{V}_k$ in \eqref{eq:var} are not closed. To derive a closed system for $\rho_k \simeq \bE_k$ and $\eta_k \simeq \mathbb{V}_k$, respectively, we invoke the negative binomial approximation in \eqref{eq:46}.   Then the closed system is
\begin{subequations}
\label{eq:rho1-eta1-closure}
\begin{align}
    &\frac{d}{dt} \rho_1(t) = c_0(t)- c + \frac{c}{\sigma_*}\, \sum\limits_{i=0}^{\sigma_*-1}(\sigma_*-i)
        P_i(\rho_1(t),\eta_1(t))\\
    &\frac{d}{dt} \eta_{1}(t) = c_0(t) + c - \frac{c}{\sigma_*}\, \sum\limits_{i=0}^{\sigma_{*}-1}\, (2\rho_{1}+1-2i)(\sigma_*-i)P_i(\rho_1(t),\eta_1(t)), 
\end{align}
\end{subequations}
and for $k=2,\dots,N$,
\begin{subequations}
\label{eq:rhok-etak-closure}
\begin{align}
    &\frac{d}{dt} \rho_k(t) 
    = \frac{c}{\sigma_*}\, \sum\limits_{i=0}^{\sigma_*-1}(\sigma_*-i)P_i(\rho_k(t),\eta_k(t)) 
    -\frac{c}{\sigma_*}\, \sum\limits_{i=0}^{\sigma_*-1}(\sigma_*-i)P_i(\rho_{k-1}(t),\eta_{k-1}(t)),\\
    & \frac{d}{dt} \eta_{k}(t) 
    = 2\, c- \frac{c}{\sigma_*}\, \sum\limits_{i=0}^{\sigma_*-1}\,(\sigma_*-i)P_i(\rho_{k-1}(t),\eta_{k-1}(t))
    -\frac{c}{\sigma_*}\, \sum\limits_{i=0}^{\sigma_*-1}\,(\sigma_*-i)\,(2\,\rho_{k}+1-2\,i) P_i(\rho_{k}(t),\eta_{k}(t)).
\end{align}
\end{subequations}

The equations in \eqref{eq:rho1-eta1-closure} and \eqref{eq:rhok-etak-closure}, along with the ansatz $P_n$ defined in \eqref{eq:P}, form a closed system that approximates the time evolution of the first two moments
of the stochastic process $\bsigma(t)$. Moreover, the right-hand sides of the equations \eqref{eq:rho1-eta1-closure} and \eqref{eq:rhok-etak-closure} involve only $O(\sigma_*)$ terms in the summation; thus the system is computationally efficient to evaluate.

\subsection{Analysis of the Deterministic System}
\label{sec3.2}
In this section, we prove that the deterministic system 
\eqref{eq:rho1-eta1-closure}, \eqref{eq:rhok-etak-closure}
is well-posed and results in physically admissible solutions. 
In particular, the main results in this section are Theorems 
\ref{th1} (existence and uniqueness of solutions) and \ref{th2}
(invariance of the domain $D$ defined below).

Define the domain $D = \{(\rho,\eta):0\leq\rho\leq\eta\}$ and 
let $\operatorname{int} D = \{(\rho,\eta):0<\rho <\eta\}$ denote its interior.  Define the dynamics $ y' = F(y)$
with $y = (\rho,\eta)^{\top}$ and
$F: D  \rightarrow\R^2$ given by 
\begin{equation}\label{eq:Fdef}
F(\rho,\eta) 
= \begin{pmatrix}
  F_1(\rho,\eta) \\
  F_2(\rho,\eta)
\end{pmatrix} 
=
\begin{pmatrix}
c_0(t)-c+\frac{c}{\sigma_*}\sum_{i=0}^{\sigma_*-1}(\sigma_*-i)P_i(\rho,\eta)\\
c_0(t)+c-\frac{c}{\sigma_*}\sum_{i=0}^{\sigma_*-1}(2\rho-2i+1)(\sigma_*-i)P_i(\rho,\eta),
\end{pmatrix}
\end{equation}
where
\begin{equation}\label{eq:P0def}
P_0(\rho,\eta)=\begin{cases}
1, & \text{ if } 0=\rho \leq \eta,\\
p^r, & \text{ if } 0<\rho<\eta,\\
e^{-\rho}, & \text{ if } 0<\rho=\eta,\\
\end{cases}
\end{equation}
and for $n>0$ 
\begin{equation}\label{eq:Pndef}
P_n(\rho,\eta)=\begin{cases}
0 & \text{ if } 0=\rho \leq \eta\\
\big[(1-p)^np^r\big]\frac{1}{n!} \prod_{i=1}^n (n+r-i) & \text{ if } 0<\rho<\eta\\
\frac{\rho^n}{n!}e^{-\rho} & \text{ if } 0<\rho=\eta\\
\end{cases} ,
\end{equation}
with $p=\hat{p}(\rho,\eta) = \frac{\rho}{\eta}$ and $r = \hat{r}(\rho,\eta) = \frac{\rho^2}{\eta-\rho}$ as in \eqref{eq:hat-p-hat-r}. Standard results from the theory of ODEs establish that the initial value problem $y'=F(y), ~y(0)=(\rho_0,\eta_0)^T$ will have a unique solution for $(\rho_0,\eta_0)^T\in \operatorname{int}D$.  However, in order to establish the existence of unique solutions for $(\rho_0,\eta_0)\in D,$ we extend $F$ to a function $\bar{F}$ that is defined on all of $\R^2$ and satisfies necessary conditions for the existence of a unique solution.  We also show that $D$ is invariant under the dynamics of $\bar{F}$.
%


\subsubsection{Preliminary Background Results}
For the ODE $y'=f(t,y)$ with initial condition $y(a)=y_0$, define the sequence $a=t_0<t_1<\dots t_N=T$ with nonuniform cell sizes $h_i=t_{i+1}-t_i$ and $h=\max h_i$.  The Euler polygonal arc is
\begin{subequations}
\begin{alignat}{3}
  y^h(t)   &= y_i+(t-t_i)f(t_i,y_i),&& \qquad t\in[t_i,t_{i+1}], \\
  y_{i+1}  &= y^h(t_{i+1}),&& \qquad i = 0,\dots,t_{N-1}.
\end{alignat}
\end{subequations}
The next theorem establishes the existence and uniqueness of solutions using 
Euler polygonal arcs.
\begin{theorem}\label{thm:EandU}(Theorem 7.3 of \cite{HaiNorWan00})
  Let $f$ be continuous, uniformly bounded by $M$, and Lipschitz on $\{(t,y):a\leq t\leq T,y\in\overline{B}_R(y_0)\}$, where ${B}_R(y_0)$ is the ball of radius $R$ centered at $y_0$.  Then if $T-a\leq R/M$, the Euler polygonal arcs $\{y^h(t)\}_{h>0}$ converge uniformly to a continuous function $y(t)$ as $h\rightarrow0$. Moreover, $y(t)$ is continuously differentiable and is the unique solution to the initial value problem on $[0, T]$.
\end{theorem}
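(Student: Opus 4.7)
The plan is to run the classical program for the Cauchy--Lipschitz theorem with Euler arcs: confirm the polygonal arcs stay in the good ball, apply Arzelà--Ascoli to extract a uniformly convergent subsequence, pass to the limit in the integral form of the scheme, and then upgrade subsequential convergence to convergence of the whole family via uniqueness.

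First I would show by induction on $i$ that each node $y_i$ lies in $\overline{B}_R(y_0)$, so that $f(t_i,y_i)$ is defined and has norm at most $M$. Since $y^h$ is a piecewise linear interpolant whose slopes are these values, the chord estimate
\begin{equation}
\|y^h(t)-y_0\| \leq M(t-a) \leq M(T-a) \leq R
\end{equation}
follows directly, using the structural hypothesis $T-a\leq R/M$ in an essential way. The same computation shows that each $y^h$ is Lipschitz with constant $M$, independent of $h$.

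Next, the family $\{y^h\}_{h>0}$ is uniformly bounded on $[a,T]$ and uniformly equicontinuous, so Arzelà--Ascoli yields a subsequence $y^{h_k}$ converging uniformly on $[a,T]$ to a continuous limit $y$. To identify $y$ as a solution, I would rewrite the scheme in integral form
\begin{equation}
y^h(t) = y_0 + \int_a^t g^h(s)\,ds, \qquad g^h(s) = f(t_i,y_i) \text{ for } s\in[t_i,t_{i+1}),
\end{equation}
and show $g^{h_k}(s)\to f(s,y(s))$ uniformly. Using the Lipschitz constant $L$ of $f$, the polygonal Lipschitz bound $M$, and uniform convergence,
\begin{equation}
\|g^h(s)-f(s,y(s))\| \leq L\bigl(|s-t_i| + \|y_i-y^h(s)\| + \|y^h(s)-y(s)\|\bigr) \leq L\bigl((1+M)h + \|y^h-y\|_\infty\bigr),
\end{equation}
so passing to the limit along the subsequence gives $y(t) = y_0 + \int_a^t f(s,y(s))\,ds$. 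Continuity of $f(\cdot,y(\cdot))$ then implies $y\in C^1$ with $y'(t)=f(t,y(t))$.

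Finally, uniqueness follows from Gronwall's inequality applied to the difference of two solutions via the Lipschitz estimate on $f$; and uniqueness lets me promote the subsequential convergence to convergence of the entire family, since any limit point of $\{y^h\}$ must coincide with the unique solution. The only genuinely technical step is the uniform convergence $g^{h_k}\to f(\cdot,y(\cdot))$, and the main obstacle there is bookkeeping the two sources of discrepancy (the time quantization $|s-t_i|\leq h$ and the space error $\|y^h-y\|_\infty$) simultaneously; the uniform polygonal-Lipschitz bound $M$ is exactly what makes this bookkeeping clean.
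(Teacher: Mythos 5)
The paper does not actually prove this statement---it is imported verbatim as Theorem 7.3 of Hairer--N{\o}rsett--Wanner, so the only fair comparison is with the proof in that reference. Your argument is correct and complete: the induction keeping the nodes in $\overline{B}_R(y_0)$ uses the hypothesis $T-a\leq R/M$ exactly where it is needed, the equi-Lipschitz bound $M$ justifies Arzel\`a--Ascoli, the estimate $\|g^h(s)-f(s,y(s))\|\leq L\bigl((1+M)h+\|y^h-y\|_\infty\bigr)$ correctly exploits the joint Lipschitz hypothesis to pass to the limit in the integral form, and Gronwall plus the ``every limit point is the unique solution'' argument legitimately upgrades subsequential to full-family convergence. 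The route is genuinely different from the one in the cited source, however: Hairer--N{\o}rsett--Wanner avoid compactness entirely and instead prove that the Euler polygons form a uniformly Cauchy family as $h\to 0$, by comparing polygons on nested refinements of the mesh and propagating the local defect through a discrete Gronwall estimate of the form $\varepsilon\,(e^{L(t-a)}-1)/L$. Their argument is constructive and yields an explicit rate of convergence in $h$, which is useful if one cares about the Euler method as a numerical scheme; yours is softer (the convergence of the whole family is obtained by contradiction rather than by an estimate) but shorter and arguably cleaner as a pure existence--uniqueness proof. For the purposes of this paper, where the theorem is used only to conclude well-posedness of the closure ODE, either proof suffices.
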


The following two lemmas about Lipschitz continuity and the projection operator are also useful.
\begin{lemma}\label{thm:lipext} (from Theorem 9.58 of \cite{RocWet98})
For any $X\subset\R^n$ and any $F:X\rightarrow\R^m$ that is Lipschitz continuous relative to $X$ with constant $K$, $F$ has a unique continuous extension to $\bar{X}$, necessarily Lipschitz continuous with constant $K$.
\end{lemma}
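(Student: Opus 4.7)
The plan is to construct the extension $\bar F$ on $\bar X$ by continuous completion, exploiting the fact that a Lipschitz map is, in particular, uniformly continuous, so classical completion arguments apply. Completeness of the target $\R^m$ is what makes the construction go through.

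First I would define the candidate extension pointwise. Given $\bar x\in\bar X$, choose a sequence $(x_n)\subset X$ with $x_n\to\bar x$; such a sequence exists by the definition of closure. The sequence $(x_n)$ is Cauchy in $\R^n$, and the Lipschitz hypothesis gives
\begin{equation}
\|F(x_n)-F(x_m)\| \leq K\|x_n-x_m\|,
\end{equation}
so $(F(x_n))$ is Cauchy in $\R^m$ and, by completeness, converges. I would set $\bar F(\bar x)$ equal to this limit. Second, to see that $\bar F(\bar x)$ is independent of the chosen sequence, note that if $(x_n),(y_n)\subset X$ both converge to $\bar x$, then $\|F(x_n)-F(y_n)\|\leq K\|x_n-y_n\|\to 0$, so $\lim F(x_n)=\lim F(y_n)$. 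Choosing the constant sequence $x_n=\bar x$ for $\bar x\in X$ shows $\bar F\equiv F$ on $X$, so $\bar F$ genuinely extends $F$.

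Next I would propagate the Lipschitz estimate to $\bar X$. Given $\bar x,\bar y\in\bar X$ with approximating sequences $x_n\to\bar x$ and $y_n\to\bar y$ in $X$, the Lipschitz inequality $\|F(x_n)-F(y_n)\|\leq K\|x_n-y_n\|$ passes to the limit by continuity of the norm, yielding
\begin{equation}
\|\bar F(\bar x)-\bar F(\bar y)\| \leq K\|\bar x-\bar y\|.
\end{equation}
In particular, $\bar F$ is continuous on $\bar X$ with the same Lipschitz constant $K$. Finally, for uniqueness, any continuous extension $G\colon\bar X\to\R^m$ of $F$ must satisfy $G(\bar x)=\lim_n G(x_n)=\lim_n F(x_n)=\bar F(\bar x)$ for every approximating sequence $(x_n)\subset X$, so $G=\bar F$.

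I do not expect any genuine obstacle here; this is the standard unique-continuous-extension argument for uniformly continuous maps into a complete space, specialized to the Lipschitz case where the modulus of continuity is linear. The only points that need care are invoking completeness of $\R^m$ to guarantee that the Cauchy image sequence has a limit, and the routine limit-passage through the norm to transfer the Lipschitz constant to the closure.
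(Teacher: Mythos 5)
Your proof is correct and complete: the Cauchy-sequence construction, the well-definedness check, the limit passage for the Lipschitz constant, and the uniqueness via sequential continuity are all sound. The paper itself gives no proof of this lemma --- it is quoted from Theorem 9.58 of the cited reference --- and your argument is precisely the standard unique-continuous-extension argument that underlies that result, so there is nothing to reconcile.
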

\begin{lemma}\label{thm:proj} (Theorem 2 of \cite{CheGol59})
	Let $C\subset \R^n$ be a non-empty, closed, convex set; let $x,y \in \R^n$; and let $\proj_C(x)$ be the standard projection (in the 2-norm) of $x$ onto $C$.  Then $\|\proj_C(x)-\proj_C(y)\|\leq \|x-y\|$.
\end{lemma}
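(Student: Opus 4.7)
The plan is to exploit the variational (obtuse-angle) characterization of the metric projection onto a nonempty closed convex set: for any $x\in\R^n$, the point $u = \proj_C(x)$ is the unique element of $C$ satisfying $\langle x - u,\, c - u\rangle \leq 0$ for every $c\in C$. Once this characterization is in hand, the Lipschitz estimate reduces to a short algebraic manipulation, so the substantive work is establishing the characterization itself.

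To obtain the characterization I would argue as follows. Existence of $u$ is standard: the function $c\mapsto \|x-c\|^2$ is continuous and coercive, so by closedness of $C$ the infimum over $C$ is attained; strict convexity of the squared norm then gives uniqueness. For the inequality, fix an arbitrary $c\in C$ and consider $\phi(t) = \|x-((1-t)u+tc)\|^2$ on $[0,1]$. Convexity of $C$ places the argument in $C$, so $\phi$ is minimized at $t=0$, whence $\phi'(0^+)\geq 0$, which after expansion reads $-2\langle x-u, c-u\rangle \geq 0$, i.e., $\langle x-u, c-u\rangle \leq 0$. Conversely, any $u\in C$ satisfying this inequality is the minimizer by a short calculation from the expansion $\|x-c\|^2 = \|x-u\|^2 - 2\langle x-u, c-u\rangle + \|c-u\|^2$.

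Given the characterization, let $u=\proj_C(x)$ and $v=\proj_C(y)$. Applying the inequality to $x$ with test point $v\in C$ gives $\langle x-u, v-u\rangle \leq 0$, and applying it to $y$ with test point $u\in C$ gives $\langle y-v, u-v\rangle \leq 0$. Adding the two and rearranging yields
\begin{equation}
\|u-v\|^2 \;\leq\; \langle x-y,\, u-v\rangle.
\end{equation}
A single application of the Cauchy--Schwarz inequality bounds the right-hand side by $\|x-y\|\,\|u-v\|$. If $u=v$ the desired bound is trivial; otherwise, dividing both sides by $\|u-v\|$ gives $\|\proj_C(x)-\proj_C(y)\| \leq \|x-y\|$.

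The main obstacle, such as it is, is packaging the variational characterization compactly, since the non-expansivity step itself is essentially three lines. The three ingredients that are genuinely used are closedness of $C$ (for existence of the minimizer), convexity of $C$ (so that $(1-t)u+tc\in C$ is a valid competitor in the minimization), and the inner-product structure of $\R^n$ (to give meaning to the obtuse-angle inequality); all three are provided by the hypotheses of the lemma, so no further assumptions are needed.
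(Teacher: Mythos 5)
Your proof is correct: the variational characterization $\langle x-\proj_C(x),\,c-\proj_C(x)\rangle\leq 0$ for all $c\in C$, followed by summing the two inequalities and applying Cauchy--Schwarz, is the standard argument for non-expansivity of the metric projection, and each step (existence by coercivity and closedness, uniqueness by strict convexity, the derivative computation $\phi'(0^+)\geq 0$) is carried out correctly. The paper itself offers no proof --- it simply cites Theorem 2 of Cheney and Goldstein --- so there is nothing to compare against; your argument is essentially the classical one found in that reference and in standard convex-analysis texts, and it is complete as written.
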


\subsubsection{Existence of unique solutions}
We establish the existence of unique solutions for the ODE system defined by the dynamics in \eqref{eq:Fdef}.
To do so, we will first show that an extension of $F$ satisfies the hypotheses of
\Cref{thm:EandU}.
\begin{lemma}\label{lem:PbndLip}
	For each $n\geq0,$ $P_n$ is locally bounded and locally Lipschitz on the interior of $D$.
\end{lemma}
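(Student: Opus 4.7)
The plan is to observe that $P_n$ is in fact $C^\infty$ on $\operatorname{int} D$, which immediately implies both local boundedness and local Lipschitz continuity. I would proceed by expressing $P_n$ as a composition of manifestly smooth pieces and then invoking standard facts about continuous (resp.\ $C^1$) functions on open sets.

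First I would verify that the parameter maps $\hat p(\rho,\eta)=\rho/\eta$ and $\hat r(\rho,\eta)=\rho^2/(\eta-\rho)$ are $C^\infty$ on $\operatorname{int} D$: both are rational functions whose denominators ($\eta$ and $\eta-\rho$) are strictly positive on $\operatorname{int}D$. In addition, $\hat p(\operatorname{int}D)\subset(0,1)$ and $\hat r(\operatorname{int}D)\subset(0,\infty)$, which keeps us in the regime where the subsequent operations are smooth.

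Next I would write $P_n(\rho,\eta)=G_n(\hat p(\rho,\eta),\hat r(\rho,\eta))$ with
\begin{equation}
G_0(p,r)=p^r=e^{r\log p},\qquad G_n(p,r)=\frac{1}{n!}\Bigl(\prod_{i=1}^{n}(n+r-i)\Bigr)\,p^r(1-p)^n \quad\text{for } n\geq 1,
\end{equation}
viewed on the open set $(0,1)\times(0,\infty)$. On this set, $\log p$ is smooth, so $p^r=e^{r\log p}$ is smooth; the factor $(1-p)^n$ is polynomial in $p$; and the falling-factorial $\prod_{i=1}^{n}(n+r-i)$ is polynomial in $r$. Hence each $G_n$ is $C^\infty$ on $(0,1)\times(0,\infty)$, and composition with the smooth maps $\hat p,\hat r$ gives $P_n\in C^\infty(\operatorname{int}D)$.

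The conclusion then follows from two standard facts: (i) a continuous function on an open set is bounded on every compact neighborhood of any point in that set, giving local boundedness; and (ii) a $C^1$ function on an open set is locally Lipschitz, via the mean value inequality applied on convex neighborhoods. There is no real obstacle here; the only point requiring care is the positivity of the denominators of $\hat p$ and $\hat r$ and of the argument of $\log$, all of which are automatic on $\operatorname{int}D$. The lemma is essentially a bookkeeping statement that the composition defining $P_n$ never touches the singular locus on the interior.
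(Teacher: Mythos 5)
Your argument is correct for the lemma as stated, but it takes a genuinely different route from the paper. You prove smoothness of $P_n$ on $\operatorname{int}D$ by writing it as a composition of the rational maps $\hat p,\hat r$ with a function that is manifestly $C^\infty$ on $(0,1)\times(0,\infty)$, and then invoke continuity for local boundedness and the mean value inequality for local Lipschitz continuity; this is shorter and avoids all computation. The paper instead factors $P_n=Q_nP_0$ with $Q_n=P_n/P_0$ a polynomial in $\rho$ and $1/\eta$, and explicitly bounds $\partial_\rho P_0$ and $\partial_\eta P_0$ by \emph{uniform} constants ($3.5$ and $0.5(1+e)$) valid on all of $\operatorname{int}D$, via a Taylor expansion of $\log(\rho/\eta)$. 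The payoff of that extra work is quantitative: the paper's Lipschitz constants do not degenerate as $(\rho,\eta)$ approaches the boundary rays $\{\rho=0\}$ and $\{\rho=\eta\}$, which is exactly what the subsequent lemma uses (via \Cref{thm:lipext}) to extend $P_n$ Lipschitz-continuously to all of $D$. Your qualitative $C^1\Rightarrow$ locally Lipschitz argument gives no control on the Lipschitz constant near $\partial D$ --- indeed the derivatives of $\hat r(\rho,\eta)=\rho^2/(\eta-\rho)$ blow up as $\eta\to\rho$ --- so while it settles this lemma, it could not be substituted for the paper's proof without breaking the downstream extension argument.
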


The proof of \Cref{lem:PbndLip} uses the following lemma, which is proven in \cite{rothkopforen1979}.
\begin{lemma}\label{lem:P0bd}
For all $\rho \in \mathbb{R}$ and $\eta \in \mathbb{R}$ such that $0 < \rho \le \eta$ and for all integers $n \geq 0$,
\begin{equation}
    \label{inequality}
    0 \leq e^{-\rho} \le P_0(\rho,\eta) \le e^{-\rho^{2}/\eta} \le 1.
\end{equation}
\end{lemma}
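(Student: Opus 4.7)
The plan is to break the chain into four inequalities and dispatch them individually. The two outermost bounds, $0\le e^{-\rho}$ and $e^{-\rho^2/\eta}\le 1$, are immediate from positivity of the exponential and from $\rho^2/\eta\ge 0$ (which holds since $\eta\ge\rho>0$). So the content lies in the middle two inequalities, $e^{-\rho}\le P_0(\rho,\eta)\le e^{-\rho^2/\eta}$.

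First I would handle the boundary ray $\rho=\eta$ separately by direct inspection: by definition \eqref{eq:P0def}, $P_0(\rho,\rho)=e^{-\rho}$, and also $e^{-\rho^2/\eta}=e^{-\rho^2/\rho}=e^{-\rho}$, so both middle inequalities collapse to equalities. This also confirms that the bounds are sharp along this ray.

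For the open region $0<\rho<\eta$, I would use $P_0(\rho,\eta)=p^r$ with $p=\rho/\eta\in(0,1)$ and $r=\rho^2/(\eta-\rho)>0$, and take logarithms. Since all three quantities are positive, the desired chain $e^{-\rho}\le p^r\le e^{-\rho^2/\eta}$ is equivalent to
\begin{equation*}
-\rho \;\le\; r\ln p \;\le\; -\rho^2/\eta.
\end{equation*}
Substituting $r$ and $p$, dividing through by $\rho>0$, and introducing the variable $x=\eta/\rho>1$, these two inequalities become, after using $\ln p=-\ln x<0$ and $r/\rho=\rho/(\eta-\rho)=1/(x-1)$,
\begin{equation*}
\frac{1}{x} \;\le\; \frac{\ln x}{x-1} \;\le\; 1,
\qquad\text{i.e.,}\qquad
1-\tfrac{1}{x} \;\le\; \ln x \;\le\; x-1 \quad\text{for all } x>1.
\end{equation*}

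These are the standard tangent-line bounds for $\ln$. The upper bound $\ln x\le x-1$ follows from concavity of $\ln$ (the tangent line at $x=1$ lies above the graph). The lower bound $\ln x\ge 1-1/x$ can be obtained by applying the upper bound to $1/x$, or directly by setting $g(x)=\ln x-1+1/x$ and noting $g(1)=0$ together with $g'(x)=1/x-1/x^2=(x-1)/x^2\ge 0$ for $x\ge 1$. Multiplying back through by $\rho>0$ (and accounting for the sign of $\ln p$, which is negative so that multiplication by $r>0$ preserves the reversed direction correctly) recovers the desired inequality on $P_0(\rho,\eta)$.

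There is no serious obstacle here; the only care required is tracking the sign of $\ln p$ and verifying that the substitution $x=\eta/\rho$ is valid (which is exactly where the hypothesis $0<\rho\le\eta$ is used). The result then reduces cleanly to an elementary one-variable inequality for the logarithm.
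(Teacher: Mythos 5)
Your proof is correct. Note, however, that the paper does not actually prove this lemma: it is stated as a result taken from the cited reference of Rothkopf and Oren (1979), so there is no internal proof to compare against. Your argument is a clean, self-contained replacement. The reduction is sound: on the ray $\rho=\eta$ the definition \eqref{eq:P0def} gives $P_0=e^{-\rho}$ and the upper bound also equals $e^{-\rho}$, so the chain degenerates to equalities; on $0<\rho<\eta$, writing $P_0=p^r$ and taking logarithms correctly converts the two middle inequalities into
\begin{equation*}
1-\frac{1}{x}\;\le\;\ln x\;\le\;x-1,\qquad x=\frac{\eta}{\rho}>1,
\end{equation*}
which are the standard tangent-line bounds for the logarithm, and your sign bookkeeping (dividing by $\rho>0$, using $\ln p=-\ln x<0$ and $r/\rho=1/(x-1)$) checks out. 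The outer inequalities $0\le e^{-\rho}$ and $e^{-\rho^2/\eta}\le1$ are indeed trivial. One cosmetic remark: the quantifier ``for all integers $n\ge 0$'' in the lemma statement is vacuous since $n$ does not appear in \eqref{inequality}, and you were right to ignore it.
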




It will be useful in the proof for \Cref{lem:PbndLip} and subsequent proofs to use the auxiliary function
\begin{equation}
\label{eq:Qn}
\begin{split}
   Q_n(\rho,\eta) := \frac{ P_n(\rho,\eta)}{ P_0(\rho,\eta)} 
   & = \frac{(1-p)^n}{n!} \prod_{i=1}^n (n-i+r)
    = \frac{1}{n!}\prod_{i=0}^{n-1} \Big( (1-p)i + (1-p)r \Big) =
      \frac{1}{n!}\prod_{i=0}^{n-1} \Big( (1-\frac{\rho}{\eta})i + \frac{\rho^2}{\eta} \Big)
\end{split}
\end{equation}
for $n>0$ and $(\rho,\eta) \in \operatorname{int} D$.

\begin{proof}[Proof of \Cref{lem:PbndLip}]
	Let $(\rho_0,\eta_0)\in\text{int}D=\{0<\rho<\eta\}$ and let $\delta>0$ be such that 
    ${B}_\delta\big((\rho_0,\eta_0)\big)\subset \text{int}D$. 
	According to \cref{lem:P0bd}, $P_0$ is uniformly bounded on $B_\delta\big((\rho_0,\eta_0)\big)$.
	To establish the local boundedness of $P_n$ for $n>0$, we bound $Q_n$.
	On ${B}_\delta\big((\rho_0,\eta_0)\big)$, $0<\rho/\eta<1$ and $0<\rho<\rho_0+\delta$; this means $0\leq \frac{\rho^2}{\eta}+i(1-\frac{\rho}{\eta})\leq \rho+i\leq \rho_0+\delta+n$ for $0\leq i\leq n-1$.  That, in turn, implies
	\begin{equation}
    \label{eq:Qn-bound}
	|Q_n(\rho,\eta)|\leq M_{\rho_0,\delta,n}:=\frac{(\rho_0+\delta+n)^n}{n!},
	\end{equation}
	thus providing a bound for $Q_n$ on ${B}_\delta\big((\rho_0,\eta_0)\big)$.
	
	We next show that there is a Lipschitz constant $L>0$  for $P_n$ on ${B}_\delta\big((\rho_0,\eta_0)\big)$.
	Since $P_n=Q_nP_0$, it follows from the triangle inequality that
	\begin{equation}\begin{split}
	|P_n(\rho,\eta)-P_n(\tilde{\rho},\tilde{\eta})|
	&\leq |Q_n(\rho,\eta)| |P_0(\rho,\eta)-P_0(\tilde{\rho},\tilde{\eta})|
    +|P_0(\tilde{\rho},\tilde{\eta})| |Q_n(\rho,\eta)-Q_n(\tilde{\rho},\tilde{\eta})|\\
	&\leq M_{\delta,n}|P_0(\rho,\eta)-P_0(\tilde{\rho},\tilde{\eta})|
	+|Q_n(\rho,\eta)-Q_n(\tilde{\rho},\tilde{\eta})|,	\end{split}\end{equation}
	where
	$P_0(\rho,\eta) \le 1$ by
	\Cref{lem:P0bd}.
	Thus it is sufficient to show that $P_0$ and $Q_n$ are each Lipschitz in $B_\delta\big((\rho_0,\eta_0)\big)$.
	To do so, we bound the gradient of each on $\operatorname{int} D$.
	First, we consider $P_0$. To simplify the gradient computations, we rewrite \eqref{eq:P0def}, for $0 < \rho < \eta$,
 \begin{equation}
    P_0(\rho,\eta) = \left(\frac{\rho}{\eta}\right)^{\rho^{2}/(\eta-\rho)} = 
    \exp\left(\frac{\rho^{2}}{\eta-\rho}\,\log\left(\frac{\rho}{\eta}\right)\right) = 
    \exp\left({-\frac{p\,\rho}{p-1}\,\log(p)}\right).
 \end{equation}
From this formula, it is easy to show that
	\begin{subequations}
    \label{eq:P0_partials}
    \begin{align}
	\partial_\rho P_0(\rho,\eta) &= \left(\left[\frac{2\rho\eta-\rho^2}{(\eta-\rho)^2}\right]\log\left(\frac{\rho}{\eta}\right)+\frac{\rho}{\eta-\rho} \right)P_0(\rho,\eta),\\
	\p_\eta P_0(\rho,\eta) &= \left(\left[\frac{-\rho^2}{(\eta-\rho)^2}\right]\log\left(\frac{\rho}{\eta}\right)-\frac{\rho^2}{(\eta-\rho)\eta} \right)P_0(\rho,\eta).
	\end{align}\end{subequations}
	Because $ 0 < \rho < \eta$ on $B_\delta\big((\rho_0,\eta_0)\big)$, the partial derivatives in \eqref{eq:P0_partials} are continuous on $B_\delta\big((\rho_0,\eta_0)\big)$. Expanding $\p_\rho P_0$ using a Taylor expansion for $x \mapsto \log(1-x)$, along with the bound $0\leq P_0(\rho,\eta)\leq 1$,   gives 
	\begin{equation}\begin{split}
    \label{eq:dP0-drho}
     	|\p_\rho P_0(\rho,\eta) |
      &\leq \Big\lvert-\frac{2\rho\eta-\rho^2}{(\eta-\rho)^2}\Big(\frac{\eta-\rho}{\eta}+\frac{(\eta-\rho)^2}{2\eta^2}+\sum_{k=3}^\infty\frac{(\eta-\rho)^k}{k\eta^k}\Big)+\frac{\rho}{\eta-\rho}\Big\rvert\\
	    &=\Big\lvert -\frac{2\rho\eta-\rho^2}{\eta(\eta-\rho)}-\frac{2\rho\eta-\rho^2}{2\eta^2}+\frac{\rho}{\eta-\rho}-\frac{2\rho\eta-\rho^2}{(\eta-\rho)^2}\sum_{k=3}^\infty\frac{(\eta-\rho)^k}{k\eta^k}\Big\rvert\\
	    &= \Big\lvert-2\frac{\rho}{\eta}+\frac{\rho^2}{2\eta^2}-\frac{2\rho\eta-\rho^2}{\eta^2}\sum_{k=3}^\infty\frac{(\eta-\rho)^{k-2}}{k\eta^{k-2}}\Big\rvert\\
	    & =\Big\lvert -2\frac{\rho}{\eta}+\frac{\rho^2}{2\eta^2}
     -\left(2\frac{\rho}{\eta}-(\frac{\rho}{\eta})^2 \right)\sum_{k=1}^\infty \frac{1}{k+2}(1-\frac{\rho}{\eta})^k \Big\rvert\
     =\Big\lvert -2\frac{\rho}{\eta}+\frac{\rho^2}{2\eta^2}
     - f\left(\frac{\rho}{\eta}\right)\Big\rvert,
\end{split}\end{equation}
	where $f(x) = (2x-x^2)\sum_{k=1}^\infty\frac{1}{k+2}(1-x)^k$ for $x\in(0,1)$. On $\operatorname{int}D$, $|-2\frac{\rho}{\eta}+\frac{\rho^2}{2\eta^2}|$ is bounded by 2.5. Moreover, for $x\in (0,1)$, $f(x)$ is non-negative and 
	\begin{equation}
	    f(x)\leq (2x-x^2)\sum_{k=1}^\infty\frac{(1-x)^k}{k}=(2x-x^2)\sum_{k=1}^\infty (-1)\frac{(-1)^{k-1}(x-1)^k}{k}=-(2x-x^2)\log x.  
	\end{equation}
    The mapping $x \mapsto -(2x-x^2)\log x$  is a strictly concave, positive  function on $(0,1)$ with a coarse upper bound of $1$; numerically, the bound is closer to $0.615.$ Thus $f(\frac{\rho}{\eta})\leq 1$ on $\operatorname{int}D$, which in light of \eqref{eq:dP0-drho}, implies that $|\p_\rho P_0|\leq 3.5$ on $\operatorname{int}D$.
    Similarly,
    \begin{equation}\begin{split}
	    |\p_\eta P_0(\rho,\eta)| &\leq \Big\lvert-\frac{\rho^2}{(\eta-\rho)^2}\Big(\frac{\eta-\rho}{\eta}+\frac{(\eta-\rho)^2}{2\eta^2}+\sum_{k=3}^\infty\frac{(\eta-\rho)^k}{k\eta^k}\Big)-\frac{\rho^2}{\eta(\eta-\rho)}\Big\rvert\\
	    &= \Big\lvert\frac{\rho^2}{2\eta^2}+\frac{\rho^2}{\eta^2}\sum_{k=3}^\infty \frac{(\eta-\rho)^{k-2}}{k\eta^{k-2}}\Big\rvert
     = \Big\lvert\frac{\rho^2}{2\eta^2}+\frac{\rho^2}{\eta^2}\sum_{k=1}^\infty \frac{(\eta-\rho)^k}{(k+2)\eta^k}\Big\rvert 
     = \Big\lvert\frac{\rho^2}{2\eta^2} + g\left( \frac{\rho}{\eta}\right)\Big\rvert,
	\end{split}\end{equation}
	where $g(x) = x^2\sum_{k=1}^\infty \frac{1}{k+2}(1-x)^k$ is non-negative for $x\in(0,1)$. On $\operatorname{int} D$, $\frac{\rho^2}{2\eta^2}\leq 0.5$.  
	Additionally, $g(x)\leq x^2\sum_{k=1}^\infty \frac{(1-x)^k}{k} = -x^2\log x$, and the  
	the mapping $x \mapsto -x^2\log x$ is positive and bounded above on $(0,1)$ by $\frac{1}{2 e}$.
	Thus $g(\frac{\rho}{\eta})\leq \frac{1}{2 e}$, which implies that $|\p_\eta P_0|\leq 0.5(1+e)$ on $\operatorname{int} D$.  
	
	Having established that the partial derivatives of $P_0$ are uniformly bounded on $\operatorname{int} D$, we next consider the derivatives of $Q_n$.
	The function $Q_n$ is a finite sum of terms of the form $c_{k,\ell}\frac{\rho^k}{\eta^\ell}$ with constants $c_{k,\ell}$ (independent of $\rho,\eta$) and positive integers $k,\ell$ such that $\ell+1\leq k\leq 2\sigma_*.$ 
	Since $0\leq \rho/\eta \leq 1$, the gradient of $\frac{\rho^k}{\eta^\ell}$ satisfies the bound
	\begin{equation}
	 \left \|\nabla \frac{\rho^k}{\eta^\ell} \right \| 
        = \sqrt{\left(\frac{k\rho^{k-1}} {\eta^\ell}\right)^2+\left(\frac{\ell\rho^{k}}{\eta^{\ell+1}}\right)^2} 
        \leq \rho^{k-(\ell+1)}
            \sqrt{k^2 +\ell^2}.
	\end{equation}
	Thus, since $\ell+1\leq k\leq 2\sigma_*$, $\|\nabla Q_n\|$ is  bounded on $B_\delta(\rho_0,\eta_0)$ by a constant $K_{\rho_0,\delta,\sigma_*}$ that depends on $\sigma_*$, $\rho_0$, and $\delta$.
	As a consequence,
	\begin{equation}\begin{split}
	|P_n(\rho,\eta)-P_n(\tilde{\rho},\tilde{\eta})|
	&\leq M_{\rho_0,\delta,n}K_1\|(\rho,\eta)-(\tilde{\rho},\tilde{\eta})\|
	+K_{\rho_0,\delta,\sigma_*}\|(\rho,\eta)-(\tilde{\rho},\tilde{\eta})\|,
	\end{split}\end{equation}
	where the constant $K_1$ is the Lipschitz constant for $P_0$ on $\operatorname{int} D$.
\end{proof}
\begin{lemma}
	For each $n=0,1,\dots,\sigma_*-1$, $P_n$ is locally Lipschitz and locally bounded on $D = \{(0\leq\rho\leq\eta\}$.
\end{lemma}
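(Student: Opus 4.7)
The plan is to reduce the statement to the previous lemma using the Lipschitz extension result in \Cref{thm:lipext}. For any $(\rho_0,\eta_0) \in D$ and $\delta > 0$, I would set $X = B_\delta((\rho_0,\eta_0)) \cap \operatorname{int} D$. This set is convex, being the intersection of two convex sets, and has closure $\bar{X} = \bar{B}_\delta((\rho_0,\eta_0)) \cap D$. The goal is to show that $P_n$ is bounded and Lipschitz on $X$, and then invoke \Cref{thm:lipext} together with the continuity of $P_n$ on $D$ to extend these properties to $\bar{X}$.

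For boundedness on $X$, I would reuse the estimates already established in \Cref{lem:PbndLip}: the global bound $|P_0| \leq 1$ from \Cref{lem:P0bd}, combined with the bound $|Q_n| \leq (\rho_0 + \delta + n)^n/n!$ from \eqref{eq:Qn-bound}. The crucial observation is that these estimates depend only on the supremum of $\rho$ over $X$—which is at most $\rho_0 + \delta$—and not on whether the ball is well-contained in $\operatorname{int} D$. Hence the bounds remain valid even when $(\rho_0,\eta_0) \in \partial D$.

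For the Lipschitz property on $X$, I would again use the decomposition $P_n = Q_n P_0$ and the triangle inequality as in the proof of \Cref{lem:PbndLip}. The uniform estimates $|\partial_\rho P_0| \leq 3.5$ and $|\partial_\eta P_0| \leq 0.5(1+e)$ already hold on all of $\operatorname{int} D$, and the gradient bound on $Q_n$ coming from the constraint $\ell + 1 \leq k \leq 2\sigma_*$ remains valid on $X$, since the inequality $\rho \leq \eta$ on $D$ controls each ratio $\rho^k/\eta^\ell$ by a power of $\rho$, which in turn is at most $\rho_0 + \delta$. Convexity of $X$ then converts these gradient bounds into a Lipschitz constant for $P_n$ via the mean value theorem.

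The final step is to apply \Cref{thm:lipext}, which extends $P_n|_X$ uniquely to a continuous function on $\bar{X}$ with the same Lipschitz constant. Because the boundary values in \eqref{eq:P0def} and \eqref{eq:Pndef} were chosen precisely as the interior limits computed in \eqref{eq:ray1} and \eqref{eq:ray2}, $P_n$ is continuous on all of $D$; by uniqueness of the continuous extension, the extension must agree with $P_n$ on $\bar{X}$. The main subtlety I expect to handle carefully is continuity at the origin, where the two boundary rays meet: the inequality $\rho^2/\eta \leq \rho$, valid on $D$, forces each factor in $Q_n$ to vanish as $(\rho,\eta) \to (0,0)$, matching the definition $P_n(0,0) = 0$ for $n \geq 1$ and $P_0(0,0) = 1$.
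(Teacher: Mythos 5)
Your proof follows essentially the same route as the paper's: reduce to the interior result of \Cref{lem:PbndLip}, observe that the bounds and gradient estimates there remain uniform on a ball intersected with $\operatorname{int}D$ even when the center lies on $\partial D$, and invoke \Cref{thm:lipext} together with the continuity of $P_n$ on the boundary rays (already established in \eqref{eq:ray1}--\eqref{eq:ray2}) to identify the Lipschitz extension with $P_n$ itself. One small correction to your final remark: near the origin not every factor of $Q_n$ vanishes (along $\rho=\eta^2$ the factor $(1-\rho/\eta)i+\rho^2/\eta$ tends to $i$); only the $i=0$ factor $\rho^2/\eta\leq\rho$ does, but since the remaining factors stay bounded the product still gives $P_n\to0$ for $n\geq1$, so the conclusion stands.
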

\begin{proof}
	We first show that $P_n$ is locally bounded. Let $(\rho_0,\eta_0)\in D$ and consider $B_\delta(\rho_0,\eta_0)$ for some $\delta>0$.  Then it follows from the definitions in \eqref{eq:P0def} and \eqref{eq:Pndef} that, on  $N =B_\delta(\rho_0,\eta_0)\cap  D$,
 \begin{equation}
 \label{eq:Pn-bound}
     |P_n| 
     \leq \max \left\{
     \sup_{N\cap\{0 = \rho \leq \eta \}} P_n,
     \sup_{N\cap \operatorname{int} D} P_n,
     \sup_{N\cap\{0 \leq \rho =\eta\}}e^{-\rho}\frac{\rho^n}{n!} \right\}.
 \end{equation}
The first term in \eqref{eq:Pn-bound} is bounded by 1; cf. \eqref{eq:P0def} and \eqref{eq:Pndef}. By the definition of $Q_n$ in \eqref{eq:Qn} $|P_n| \leq |P_0 Q_n| \leq M_{\delta,n}$ on ${N\cap \operatorname{int} D}$, where $M_{\delta,n}$ is given in \eqref{eq:Qn-bound}.  Thus the second term in \eqref{eq:Pn-bound} is bounded. Meanwhile, the third term is bounded on $\bar{N}\cap\{\rho = \eta\}$ due to the continuity of $e^{-\rho}\frac{\rho^n}{n!}$.   
	
	We next show that $P_n$ is locally Lipschitz. By \Cref{lem:PbndLip}, $P_n$ is Lipschitz on $N\cap \operatorname{int} D$. The continuous extension on the sets $N\cap\{0 = \rho \leq \eta \}$ and $N\cap\{0 \leq \rho =\eta\}$ to the values defined in \eqref{eq:P0def} and \eqref{eq:Pndef} is established in \eqref{eq:nb-alt}--\eqref{eq:ray2}.
	Thus $P_n|_{N\cap D}$ is a continuous extension of a Lipschitz continuous function on $N\cap\text{int}D$ and is necessarily unique and Lipschitz (e.g., \Cref{thm:lipext}). This establishes that $P_n$ is locally Lipschitz and locally bounded on $ D$.
\end{proof}
\begin{lemma}
	Let $c_0(t)\equiv0$.  The function $\bar{F}(\rho,\eta) := F(\proj_{ D}(\rho,\eta))$ is a locally bounded, locally Lipschitz function on $\R^2$.
\end{lemma}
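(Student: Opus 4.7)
The plan is to first establish that $F$ itself enjoys these properties on its domain of definition $D$, and then transfer them to $\bar F$ on all of $\R^2$ via the 1-Lipschitz property of $\proj_D$ guaranteed by Lemma \ref{thm:proj}.

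First I would verify that $F$ is locally bounded and locally Lipschitz on $D$. With $c_0\equiv 0$, inspection of \eqref{eq:Fdef} shows that both components of $F$ are finite linear combinations of $P_i(\rho,\eta)$ for $i=0,\dots,\sigma_*-1$ with coefficients that are polynomials in $\rho$ (namely the constants $(\sigma_*-i)$ and the affine factors $(2\rho-2i+1)$). These polynomial coefficients are trivially locally bounded and locally Lipschitz on all of $\R^2$, while the preceding lemma established exactly these properties for each $P_i$ on $D$. Since finite sums and products of locally bounded, locally Lipschitz functions inherit both attributes, $F$ is locally bounded and locally Lipschitz on $D$.

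The second step is to lift these properties to $\bar F$ on $\R^2$. The set $D=\{(\rho,\eta):0\leq\rho\leq\eta\}$ is a non-empty, closed, convex subset of $\R^2$, so by Lemma \ref{thm:proj} the projection $\proj_D\colon\R^2\to D$ is well-defined and 1-Lipschitz. Fix an arbitrary $(\rho_0,\eta_0)\in\R^2$ and set $(\rho_0',\eta_0'):=\proj_D(\rho_0,\eta_0)\in D$. By the first step, there exist $\delta>0$ and finite constants $M,K$ such that $|F|\leq M$ and $F$ is $K$-Lipschitz on the relatively open set $B_\delta(\rho_0',\eta_0')\cap D$. The 1-Lipschitz property of $\proj_D$ yields $\proj_D\bigl(B_\delta(\rho_0,\eta_0)\bigr)\subseteq B_\delta(\rho_0',\eta_0')\cap D$, so for all $x,y\in B_\delta(\rho_0,\eta_0)$,
\[
|\bar F(x)| = |F(\proj_D x)| \leq M, \qquad |\bar F(x)-\bar F(y)| \leq K\|\proj_D x - \proj_D y\| \leq K\|x-y\|.
\]
Since $(\rho_0,\eta_0)$ was arbitrary, this gives the desired local boundedness and local Lipschitz continuity of $\bar F$ on all of $\R^2$.

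I do not anticipate a serious technical obstacle here, since the hard analytic work — extending the $P_n$ continuously and Lipschitz-ly up to the boundary rays $\rho=0$ and $\rho=\eta$ — has already been carried out in the two preceding lemmas. The only subtlety is to keep the ball radii aligned so that a neighborhood of $(\rho_0,\eta_0)$ on which $\bar F$ is to be controlled actually projects into the neighborhood of $(\rho_0',\eta_0')$ where the estimates for $F$ hold; the fact that $\proj_D$ is a \emph{1-}Lipschitz contraction (not just Lipschitz) makes this painless, as the same $\delta$ can be used on both sides of the projection.
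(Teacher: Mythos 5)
Your proposal is correct and follows essentially the same route as the paper: show $F$ inherits local boundedness and local Lipschitz continuity on $D$ from the $P_n$ (plus the polynomial coefficients), then transfer both properties to $\bar F$ on $\R^2$ using the non-expansiveness of $\proj_D$ onto the closed convex set $D$ (Lemma~\ref{thm:proj}). Your added care about aligning the ball radii, and the observation that the factors $(2\rho-2i+1)$ require handling products rather than mere linear combinations, only make the argument slightly more explicit than the paper's version.
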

\begin{proof}
	The entries of $F$ (see \eqref{eq:Fdef}) are linear combinations of locally bounded, locally Lipschitz functions, and thus $F$ is as well.
	Because $ D$ is a convex set, the projection of $(\rho,\eta)$ onto $ D$ is unique. Hence local boundedness of $\bar{F}$ is inherited by local boundedness of $F$. Moreover, since projection onto a convex set is non-expansive (\Cref{thm:proj}), it follows that $\bar{F}$ is also locally Lipschitz, i.e., 
	\begin{equation}
		|\bar{F}(\rho_0,\eta_0)-\bar{F}(\rho_1,\eta_1)|\leq K\|(\rho_0,\eta_0)-(\rho_1,\eta_1)\|
	\end{equation}
	where $K$ is the Lipschitz constant for $F$ on a neighborhood containing $\proj_{ D}(\rho_0,\eta_0)$ and $\proj_{ D}(\rho_1,\eta_1)$.
\end{proof}
We have now established that $\bar{F}$ satisfies the hypotheses in \Cref{thm:EandU}, and thus we can immediately conclude the following.
\begin{theorem}
\label{th1}
	Let $c_0(t)$ be uniformly bounded and Lipschitz for $t\in[0,T]$ for some $T>0$.  Then, for any initial condition $(\rho_0,\eta_0)\in D$, there is a solution to the initial value problem	
	\begin{equation}
    \label{eq:ivp}
	\frac{d}{dt}\begin{pmatrix} \rho(t)\\\eta(t)\end{pmatrix}=\bar{F}(\rho(t),\eta(t)),\quad (\rho(0),\eta(0)) = (\rho_0,\eta_0).
	\end{equation}
\end{theorem}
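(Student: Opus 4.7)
The plan is to cast \eqref{eq:ivp} as a non-autonomous system $y'=G(t,y)$ with $G(t,y)=\bar F(y)$, where the time dependence enters only through $c_0(t)$, and then to apply \Cref{thm:EandU} for local existence followed by a standard continuation argument.

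First, I would decompose $G(t,y)=\bar F_0(y)+c_0(t)\,(1,1)^\top$, where $\bar F_0$ denotes $\bar F$ with $c_0\equiv 0$. The preceding lemma establishes that $\bar F_0$ is locally bounded and locally Lipschitz on $\R^2$, and the hypotheses on $c_0$ give continuity of $G$ on $[0,T]\times\R^2$ together with a uniform bound and a uniform (in $t\in[0,T]$) Lipschitz constant on every set of the form $[0,T]\times\overline{B}_R(y_0)$. \Cref{thm:EandU} then produces a unique continuously differentiable solution of \eqref{eq:ivp} on $[0,t_1]$, where $t_1=\min(T,R/M)$ and $M=\sup_{[0,T]\times\overline{B}_R((\rho_0,\eta_0))}\|G\|$.

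Second, I would extend the solution to all of $[0,T]$ by iteration: at the endpoint $t_1$, reapply \Cref{thm:EandU} starting from $(\rho(t_1),\eta(t_1))$ and continue. To ensure that this iteration reaches $T$ (that is, to rule out finite-time blowup), I would establish a uniform a priori bound. Since each $P_i(\rho,\eta)\in[0,1]$, the first component of $F$ is uniformly bounded on $\R^2$, while the second satisfies $|F_2(\rho,\eta)|\leq C_1+C_2|\rho|$ for constants depending only on $c$, $\sigma_*$, and $\|c_0\|_{L^\infty([0,T])}$. Since $(0,0)\in D$ and $\proj_D$ is non-expansive by \Cref{thm:proj}, $\|\proj_D(y)\|\leq\|y\|$, so $\|\bar F(y)\|\leq C_3(1+\|y\|)$. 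Gronwall's inequality then yields $\|(\rho(t),\eta(t))\|\leq(\|(\rho_0,\eta_0)\|+C_3T)\,e^{C_3T}$ on every interval of existence in $[0,T]$, preventing blowup and ensuring that the continuation procedure reaches $T$.

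The main obstacle is the continuation step: \Cref{thm:EandU} gives only local existence with horizon $R/M$, so extending to all of $[0,T]$ requires an a priori bound. This is the one place where the specific structure of $\bar F$ (via the bounds $P_i\leq 1$ and the linear-in-$\rho$ form of $F_2$) is used beyond the previously established local boundedness and Lipschitz properties. Uniqueness of the extended solution follows, as a byproduct, from the uniqueness asserted at each local step.
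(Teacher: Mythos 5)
Your proposal is correct and follows the same route as the paper: the paper's proof consists entirely of the observation that the preceding lemmas show $\bar F$ (with the time-dependence entering additively through $c_0(t)\,(1,1)^\top$) satisfies the hypotheses of \Cref{thm:EandU}, from which the conclusion is drawn immediately. Your additional continuation step --- using $P_i\in[0,1]$ to get the linear growth bound $\|\bar F(y)\|\leq C(1+\|y\|)$ and Gronwall to rule out blowup before $T$ --- is not in the paper, and it is a worthwhile supplement, since \Cref{thm:EandU} by itself only guarantees existence up to time $R/M$ rather than on all of $[0,T]$.
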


\subsubsection{Invariance of the domain}
We show in the next theorem that trajectories that originate in $D$ remain there.
\begin{theorem}
\label{th2}
	Assume for all $t\geq 0$, that $c_0(t)\geq 0$ and that $(\rho(t),\eta(t))$ solves the initial value problem \eqref{eq:ivp} with initial data $(\rho(0),\eta(0)) \in D$.  Then $(\rho(t),\eta(t))\in D$ for all $t\geq0$.  
\end{theorem}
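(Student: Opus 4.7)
The plan is to invoke the standard invariance principle for closed convex sets under locally Lipschitz flows, often attributed to Nagumo: the closed convex domain $D$ is positively invariant for $y' = \bar{F}(y)$ if and only if $\bar{F}(x)$ belongs to the tangent cone $T_D(x)$ at every $x \in \partial D$. Since $\proj_D$ is the identity on $D$, we have $\bar{F} = F$ on $D$, so it suffices to check the tangency condition for $F$ itself. I would decompose $\partial D$ into the two rays $R_1 = \{(0,\eta) : \eta \ge 0\}$ and $R_2 = \{(\rho,\rho) : \rho \ge 0\}$, with tangent cones $\{(a,b) : a \ge 0\}$ at interior points of $R_1$, $\{(a,b) : b \ge a\}$ at interior points of $R_2$, and $D$ itself at the corner $(0,0)$.

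The condition on $R_1$, namely $F_1(0,\eta) \ge 0$, is immediate from \eqref{eq:P0def}--\eqref{eq:Pndef}: $P_0(0,\eta) = 1$ and $P_n(0,\eta) = 0$ for $n \ge 1$, so
\[
F_1(0,\eta) = c_0(t) - c + \frac{c}{\sigma_*} \sigma_* = c_0(t) \ge 0
\]
by hypothesis on $c_0$.

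The condition on $R_2$, namely $F_2(\rho,\rho) - F_1(\rho,\rho) \ge 0$, is the main technical point. Subtracting the two components of $F$ gives
\[
F_2(\rho,\eta) - F_1(\rho,\eta) = 2c - \frac{2c}{\sigma_*} \sum_{i=0}^{\sigma_*-1}(\rho - i + 1)(\sigma_* - i)\, P_i(\rho,\eta),
\]
and on the diagonal $\eta = \rho > 0$ the closure distribution collapses to Poisson, $P_i = \rho^i e^{-\rho}/i!$ (cf.\ \eqref{eq:ray1}). Writing $X \sim \text{Poisson}(\rho)$, the sum equals $\bE[(\rho - X + 1)(\sigma_* - X)\mathbf{1}_{\{X \le \sigma_* - 1\}}]$. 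I would split $(\rho - X + 1)(\sigma_* - X) = (\rho+1)(\sigma_* - X) - X(\sigma_*-X)$, use the Poisson size-bias relation $\bE[X f(X)] = \rho\,\bE[f(X+1)]$ to shift the second term by one index, and then apply the standard identities $\sum_{k=1}^{m}\Pr[X\ge k] = \rho - \bE[(X-m)_+]$ and $\bE[(X-m)_+] = \rho\Pr[X\ge m] - m\Pr[X\ge m+1]$ with $m = \sigma_*$. After cancellation the expression collapses to the sharp form
\[
F_2(\rho,\rho) - F_1(\rho,\rho) = 2c\,\Pr[X \ge \sigma_* + 1] \ge 0.
\]
The corner $(0,0)$ is covered by the same computation, together with the $\rho=0$ evaluation above, yielding $F(0,0) \in T_D(0,0) = D$.

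The main obstacle is the Poisson calculation in the third step: the required inequality is not obvious a priori, but once the sum is re-expressed via the size-bias and telescoping identities it reduces to a manifestly non-negative Poisson tail. An alternative would be to construct a barrier or Lyapunov function for $\eta - \rho$, but the direct Nagumo computation seems cleaner and yields the sharp form $2c\,\Pr[X \ge \sigma_*+1]$, which is useful for understanding how strictly the trajectory stays inside $D$.
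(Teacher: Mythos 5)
Your proposal is correct and follows essentially the same route as the paper: the paper also verifies positive invariance via the boundary cone condition (stated dually as $\langle \xi, \bar F(\rho,\eta)\rangle \le 0$ for all $\xi$ in the normal cone, citing Aubin--Cellina rather than Nagumo), splits $\partial D$ into the same two rays and the corner, handles the ray $\rho=0$ by the same evaluation $F_1(0,\eta)=c_0\ge 0$, and reduces the diagonal case to the identity $\sum_{i=0}^{\sigma_*-1}(\rho-i+1)(\sigma_*-i)\rho^i/i! = \sigma_*\sum_{i=0}^{\sigma_*}\rho^i/i!$, which gives $F_2-F_1 = 2c\bigl(1-\sum_{i=0}^{\sigma_*}P_i\bigr)$ — exactly your $2c\Pr[X\ge\sigma_*+1]$. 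The only difference is how that identity is established: the paper's Appendix (Proposition \ref{prop:detailed_calc}) does a direct term-by-term expansion and telescoping of six sums, whereas you propose Poisson size-bias and tail identities, an equivalent but arguably cleaner derivation that makes the non-negativity manifest.
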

\begin{proof}
	Because $D$ is a closed convex set, it is sufficient to show that for any $(\rho,\eta)\in\p D$ 
 \begin{equation}
     \langle\xi,\bar{F}(\rho,\eta)\rangle\leq0, \quad \forall \xi \in N_D (\rho,\eta),
 \end{equation}
 where $N_D (\rho,\eta)$ is the normal cone of $D$ at $(\rho,\eta)$.  Then by \cite[Chapter 5, Theorem 7]{AubCel84}, $(\rho(t),\eta(t))\in D$.  There are three cases
 \begin{itemize}
 \item{\bf  Case 1: $\eta=\rho>0$.} The normal cone for such $(\rho,\eta)$ is given by
    \begin{equation}
     N_D(\rho,\eta) = \{ \xi \in \R^2 :  \xi = a \xi_1\,, a \geq 0 \},
    \end{equation}
    where $\xi_1=(1,-1)^T$.  Thus, we need only verify that 
    \begin{equation}
    \label{eq:F2-F1}
        F_2(\rho,\eta)-F_1(\rho,\eta) = 2c-\frac{2c}{\sigma_*}\sum_{i=0}^{\sigma_*-1}(\rho-i+1)(\sigma_*-i)P_i(\rho,\eta)\geq 0.
    \end{equation}
    Since $\eta=\rho>0$, $P_i(\rho, \eta) = e^{-\rho}\frac{\rho^i}{i!}$. Hence, according to Proposition \ref{prop:detailed_calc} in the Appendix
    \begin{equation}
      \sum_{i=0}^{\sigma_*-1}(\rho-i+1)(\sigma_*-i)P_i(\rho,\eta)  
      = \s \sum_{i=0}^{\sigma_*} P_i(\rho,\eta) 
    \end{equation}
    and so $F_2(\rho,\eta)-F_1(\rho,\eta)=2c-2c\sum_{i=0}^{\sigma_*}P_i(\rho,\eta)$.  Moreover, since  $\sigma_*$ is finite and $i \mapsto P_i(\rho,\eta)$ is a probability distribution, $\sum_{i=0}^{\sigma_*}P_i(\rho,\eta)<1$ , which implies $F_2(\rho,\eta)-F_1(\rho,\eta)>0$. 

\item{\bf Case 2: $\eta > \rho=0$}.  The normal cone for such $(\rho,\eta)$ is given by
    \begin{equation}
     N_D(\rho,\eta) = \{ \xi \in \R^2 :  \xi = b \xi_2\,, b \geq 0 \},
    \end{equation}
 where $\xi_2=(-1,0)^T$.  Thus we need to verify that $F_1(\rho,\eta) \geq 0$, which clearly holds since $F_1(0,\eta)= c_0 \geq 0$.
 
 \item{\bf Case 3: $\eta = \rho =0$}.  In this case,
     \begin{equation}
     N_D(\rho,\eta) = N_D(0,0) = \{ \xi \in \R^2 :  \xi = \lambda \xi_1+(1-\lambda)\xi_2, \quad \text{for some } \lambda \in [0,1]\}.
    \end{equation}
Thus we need to verify that
\begin{equation}
    \lambda(F_1 - F_2) - (1-\lambda)F_1 \leq 0, \quad \forall \lambda \in [0,1]
\end{equation}
From \eqref{eq:F2-F1}, it follows that $F_1(0,0)-F_2(0,0)=0$ and  $F_1(0,0)=c_0 \geq 0$.  Hence
\begin{equation}
    \lambda(F_1 - F_2) - (1-\lambda)F_1 
    = - (1-\lambda)c_0 \leq 0.
\end{equation}
 \end{itemize}

\end{proof}

\begin{remark}
    The results established in \Cref{th1} and \Cref{th2} apply only to the dynamics of $(\rho_1,\eta_1)$.  However, for general $k$, the inflow terms (analogous to $c_0$) on the the right-hand side of \eqref{eq:rhok-etak-closure} are simply linear combinations of $P_0(\rho_{k-1}(t),\eta_{k-1}(t))$ and $P_n(\rho_{k-1}(t),\eta_{k-1}(t))$. Thus, the results in \Cref{th1} and \Cref{th2} can be extended to the dynamics of $(\rho_k,\eta_k)$, $k >1$, by induction on $k$.
\end{remark}

\begin{remark}
    These results additionally mean that for sufficiently small time steps, we expect that a forward Euler method will produce solutions which remain in $D$ and thus correspond to realizeable mean and variance pairs. As a result, we use in \Cref{sec5}, such a method for our approximation results.
\end{remark}


\section{General multi-stage queue with stage-dependent throttling}
\label{sec4}
Many of the results above can be extended to a more realistic case when $\sigma^{(k)}_*$ depends on the stage $k$. In this setting the throttling function \eqref{v} becomes,
\begin{equation}
\label{v_k}
v_k(\sigma_k)=\begin{cases} 
      0 & \sigma_k \leq 0, \\
      \frac{\sigma_k}{\sigma^{(k)}_{*}} & 0\leq \sigma_k \leq \sigma^{(k)}_{*},\\
      1 & \sigma_k \geq \sigma^{(k)}_{*}.
   \end{cases}
\end{equation}
The extension of \Cref{prop1} to the case where we use $v_k(\sigma_k)$ for the throttling function is given in \Cref{thm:Markovext}, which we state without proof. 
 \begin{theorem}
\label{thm:Markovext}
Let $(\sigma_*^{(k)})_{k=1}^N$ be an arbitrary sequence of natural numbers. The continuous-time Markov chain $\bsigma(t)$ which uses $v_k(\sigma_k)$ for its throttling function has a unique
limiting distribution given by 
\begin{equation}
\pi(\balpha) = C\,\frac{ c_0^{\alpha_1}}{c^{\alpha_1} \prod_{k=1}^{\alpha_1}v_1(k) } \times \frac{ c_0^{\alpha_2}}{c^{\alpha_2}  \prod_{k=1}^{\alpha_2}v_2(k) } \times \cdots \times \frac{ c_0^{\alpha_N}}{c^{\alpha_N} \prod_{k=1}^{\alpha_N}v_N(k) } ,
\label{eq:new stationery}
\end{equation}
where $C$ is the normalization constant given by
\begin{equation}\begin{split}
C&= \left( {\sum_{\alpha_1,\ldots,\alpha_N} \frac{ c_0^{\alpha_1}}{c^{\alpha_1} \prod_{k=1}^{\alpha_1}v_1(k) } \times \frac{ c_0^{\alpha_2}}{c^{\alpha_2} \prod_{k=1}^{\alpha_2}v_2(k) } \times \cdots \times \frac{ c_0^{\alpha_N}}{c^{\alpha_N}  \prod_{k=1}^{\alpha_N}v_N(k) }} \right)^{-1},
\end{split}\end{equation}
and $\prod_{k=1}^{\alpha_j}v_j(k) \equiv 1$ if $\alpha_j=0$.\\
\end{theorem}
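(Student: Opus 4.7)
The plan is to follow the structure of the proof of \Cref{prop1}, making the modest adjustments needed to accommodate stage-dependent throttling functions $v_k$. There are three ingredients that need to be re-established in the new setting: irreducibility of $\bsigma(t)$, uniformity of the associated semigroup, and direct verification that the claimed product measure is stationary. Uniqueness will then follow from \Cref{thm4} exactly as before.

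First I would adapt \Cref{lem:irreducible}. The argument there only uses that moves $T_i$ have strictly positive rates whenever the preceding stage is non-empty; for the extended model this reduces to $v_k(\alpha_k) > 0$ for $\alpha_k \geq 1$ and each $k$, which holds by \eqref{v_k}. Next I would adapt \Cref{lem:semigroup}. Since $0 \leq v_k \leq 1$ for every $k$, the off-diagonal rates associated to the $N+1$ possible moves at any state $\balpha$ are still bounded by $c$, so the diagonal entries of the generator are uniformly bounded by $(N+1)c$ and \Cref{thm2} applies.

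The core step is to verify that the product formula \eqref{eq:new stationery} solves $\pi \bQ = 0$ by direct substitution in the Kolmogorov forward equation (the analogue of \eqref{eq:26} with $v$ replaced by $v_k$ on the $k$-th factor). With the convention that $\pi(\balpha) = 0$ whenever any $\alpha_j < 0$ and that $v_k(0)=0$, I would compute the ratios
\begin{equation}
\frac{\pi(S_0 \balpha)}{\pi(\balpha)} = \frac{c\, v_1(\alpha_1)}{c_0},
\quad
\frac{\pi(S_i \balpha)}{\pi(\balpha)} = \frac{v_{i+1}(\alpha_{i+1})}{v_i(\alpha_i+1)},
\quad
\frac{\pi(S_N \balpha)}{\pi(\balpha)} = \frac{c_0}{c\, v_N(\alpha_N+1)},
\end{equation}
for $i=1,\dots,N-1$. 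Inserting these into the stationary equation, the factor $v_i(\alpha_i+1)$ cancels inside each interior sum, leaving a telescoping sum $\sum_{i=1}^{N-1}[v_{i+1}(\alpha_{i+1}) - v_i(\alpha_i)] = v_N(\alpha_N) - v_1(\alpha_1)$, while the boundary inflows and outflows $c_0$ and $c\,v_N(\alpha_N)$ cancel with the $i=0$ and $i=N$ contributions. The expression therefore vanishes at every $\balpha$, and the normalization constant $C$ is finite by essentially the same summability argument as in \eqref{eq:sum_ratio} applied factor-by-factor.

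The only real obstacle is bookkeeping along the boundary of $\N^N$, namely configurations $\balpha$ where some $\alpha_j = 0$, which is where the naive telescoping might fail. I would handle this by invoking the conventions $v_k(0) = 0$ and $\pi(\cdot) = 0$ on states with any negative coordinate: then the would-be inflow from $S_{j-1}\balpha$ (which has $\alpha_j - 1 = -1$) vanishes, matched by the vanishing of the outflow term $v_j(0)\pi(\balpha)$, so no spurious terms appear and the telescoping identity still closes. Finality, once $\pi\bQ=0$ is established, uniqueness follows from the irreducibility and standardness of $\{\bP_t\}$ through \Cref{thm4}, completing the extension.
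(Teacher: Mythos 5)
Your proposal is correct and follows exactly the route the paper intends: the paper states \Cref{thm:Markovext} without proof, remarking only that it is the extension of \Cref{prop1}, and your argument is precisely that extension --- re-running \Cref{lem:irreducible} and \Cref{lem:semigroup} with the stage-dependent rates, verifying the product form in the forward equation via the telescoping cancellation, and invoking \Cref{thm4} for uniqueness. The ratio computations and the boundary bookkeeping via the conventions $v_k(0)=0$ and $\pi=0$ on states with a negative coordinate all check out.
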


We then use the negative binomial approximation introduced in 
\eqref{eq:46}
to find a closed form for the right-hand side of equations \eqref{eq:rho1-eta1-closure} and \eqref{eq:rhok-etak-closure} arising from \eqref{v_k} acting as the throttling function.  Finally, we conclude this section with the following theorem, which is a generalization of \Cref{th2}. The proofs needed are essentially unchanged.

\begin{theorem}
\label{th4}
Assume $c_0(t) > M > 0$ is a piecewise continuous function and that the initial condition is such that $\eta_k(0)\ge\rho_k(0)\geq 0.$ Let $(\sigma^{(k)}_*)_{k=1}^{N}$ be any arbitrary sequence of natural numbers for the throttling thresholds.  Then there is a unique solution pair $(\rho_k(t),\eta_k(t))$ to the differential equations analogous to \eqref{eq:rho1-eta1-closure} and \eqref{eq:rhok-etak-closure} when using \eqref{v_k} to define the stage-dependent throttling functions. Moreover, 
\begin{equation}
    \rho_k(t) \geq 0 \qquad \text{and} \qquad \eta_k(t) \ge \rho_k(t),
\end{equation}
for all $k\ge1$ and all $t$. 
\end{theorem}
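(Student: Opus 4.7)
The plan is to prove the theorem by induction on the stage index $k$, exploiting the triangular structure of the system: the dynamics of $(\rho_k, \eta_k)$ depend on $(\rho_{k-1}, \eta_{k-1})$ only through terms of the form $\sum_i (\sigma_*^{(k-1)} - i)\,P_i(\rho_{k-1}(t), \eta_{k-1}(t))$, as noted in the Remark following \Cref{th2}. This decoupling reduces the problem at each stage to a two-dimensional system of the same form as \eqref{eq:rho1-eta1-closure}, with an effective input rate that inherits its regularity from the preceding stage.

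For the base case $k=1$, the equations for $(\rho_1, \eta_1)$ under throttling function $v_1$ with threshold $\sigma_*^{(1)}$ are formally identical to \eqref{eq:rho1-eta1-closure} with $\sigma_*$ replaced by $\sigma_*^{(1)}$. If $c_0$ were Lipschitz, \Cref{th1} and \Cref{th2} would apply directly. To handle the case of piecewise continuous $c_0$ with $c_0(t) > M > 0$, I would partition $[0,\infty)$ into sub-intervals on which $c_0$ is continuous (hence uniformly bounded), apply the Lipschitz-case results on each sub-interval via a Carath\'eodory-style extension of \Cref{thm:EandU}, and then concatenate the resulting solutions at the jump points. Invariance of $D$ on each piece, provided by the analogue of \Cref{th2}, ensures that the initial data at each jump still lies in $D$, so the procedure extends globally.

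For the inductive step, suppose $(\rho_{k-1}(t), \eta_{k-1}(t)) \in D$ is the unique solution for all $t \geq 0$. Define the effective inflow
\begin{equation*}
\tilde c_0^{(k)}(t) := \frac{c}{\sigma_*^{(k-1)}} \sum_{i=0}^{\sigma_*^{(k-1)}-1} \bigl(\sigma_*^{(k-1)} - i\bigr)\, P_i\bigl(\rho_{k-1}(t), \eta_{k-1}(t)\bigr).
\end{equation*}
Then the ODE system for $(\rho_k, \eta_k)$ takes exactly the form of \eqref{eq:rho1-eta1-closure} with $c_0$ replaced by $\tilde c_0^{(k)}$ and $\sigma_*$ by $\sigma_*^{(k)}$. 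To invoke the base-case argument at level $k$, I must check: (i) $\tilde c_0^{(k)}(t) \geq 0$, which is immediate from $P_i \geq 0$; (ii) $\tilde c_0^{(k)}(t) \leq c$, since $\sum_i P_i \leq 1$ as a probability mass function and $(\sigma_*^{(k-1)} - i)/\sigma_*^{(k-1)} \leq 1$; and (iii) $\tilde c_0^{(k)}$ is at least piecewise continuous, which follows by composing the locally Lipschitz functions $P_i|_D$ (established in the lemmas preceding \Cref{th1}) with the continuously differentiable trajectory $(\rho_{k-1}(t), \eta_{k-1}(t))$.

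I expect the main obstacle to be item (iii) of the base case, namely the extension of \Cref{th1} from Lipschitz to merely piecewise continuous forcing. This requires either a Carath\'eodory-style refinement of the Euler polygonal-arc argument in \Cref{thm:EandU}, or equivalently applying \Cref{thm:EandU} on each sub-interval of continuity and verifying compatibility of the pieces at jumps. By contrast, the domain-invariance argument of \Cref{th2} is unaffected by the regularity of $c_0$ so long as $c_0(t) \geq 0$, since the normal-cone inequalities there depend only on pointwise values of $F$; and the induction on $k$ itself carries no new analytical content once the base case is in place, because $\tilde c_0^{(k)}$ plays precisely the role of $c_0$ in the structural arguments of \Cref{th1} and \Cref{th2}.
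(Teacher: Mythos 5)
Your approach---induction on $k$ using the triangular structure of the system, with \Cref{th1} and \Cref{th2} supplying the base case and the outflow of stage $k-1$ playing the role of $c_0$ at stage $k$---is exactly the route the paper takes (it gives no separate proof, stating only that the arguments are "essentially unchanged" and relying on the Remark following \Cref{th2}); your extra care about extending \Cref{th1} from Lipschitz to piecewise continuous forcing addresses a point the paper glosses over. One small correction: the effective inflow into stage $k$ is $c\,\bE[v_{k-1}(\sigma_{k-1})] \approx c - \frac{c}{\sigma_*^{(k-1)}}\sum_{i}(\sigma_*^{(k-1)}-i)P_i(\rho_{k-1},\eta_{k-1})$, i.e.\ $c$ minus the quantity you call $\tilde c_0^{(k)}$ (compare \eqref{eq:31}--\eqref{eq:32}); since the properties you verify (nonnegativity, the bound by $c$, and regularity in $t$) transfer immediately to this complemented quantity, the argument itself is unaffected.
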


\section{Numerical Simulations}
\label{sec5}

In this section, we demonstrate  numerically the validity of the approximate moment system
\eqref{eq:rho1-eta1-closure}  and \eqref{eq:rhok-etak-closure} for the mean and variance of the multi-stage queue $\bsigma(t)$. 
To this end, we compare the solutions of \eqref{eq:rho1-eta1-closure}  and \eqref{eq:rhok-etak-closure} with the mean and variance of Monte-Carlo simulations for the stochastic process 
$\bsigma(t)$.  
All simulations use a zero initial condition, which corresponds to an empty multi-stage queue.

Many parameter regimes have been studied, including constant and time-dependent input rates $c_0$ and different values of the thresholding parameter $\sigma_*$.
Below we present a few of the most interesting cases; a more exhaustive numerical investigation can be found in 
\cite{daneshvar2021}.

We consider between $100$ and $300$ stages in our simulations and use the time step $\dt=0.001$ to simulate the stochastic model. We generate $M=10^5$ Monte-Carlo realizations of the Markov chain $\bsigma(t)$ for the different time-slices. 
Then we compute 
\begin{equation}
    \begin{split}
        \mathbb{E}_k(t) \approx \frac1M \sum\limits_{p=1}^{M} \sigma^{(p)}_{k}(t)
        \quad
        \quand
        \quad
        \mathbb{V}_k(t) \approx \frac1M \sum\limits_{p=1}^{M} (\sigma^{(p)}_{k}(t)-\mathbb{E}_k(t))^2,
    \end{split}
\end{equation}
to approximate the mean and variance of the distribution for each stage $k$ and time $t$.
Here $\sigma^{(p)}_{k}(t)$, $k=1,\ldots,N$ is the $p$-th realization.
We utilize the variable time-step Euler method in Python by using Numpy and Numba libraries for the integration
of the deterministic system in \eqref{eq:rho1-eta1-closure}  and \eqref{eq:rhok-etak-closure}.  
The deterministic model is approximately one order of magnitude faster than the Monte Carlo simulations of the stochastic model.

\subsection{Piece-wise Constant Input Rate}
\label{num1}

We first consider the piecewise constant input rate
\begin{equation}
\label{input1}
c_0(t)=\begin{cases} 
      6 & t \leq 30, \\
      0 & t > 30.
   \end{cases}
\end{equation}
We set $c=10$ and consider two values of the throttling parameter:  $\sigma_*=3$ and $\sigma = 5$. \itc{The initial condition for Monte-Carlo simulations is $\sigma_k(0)=0$ and the corresponding initial condition of the deterministic system \eqref{eq:rho1-eta1-closure}, \eqref{eq:rhok-etak-closure}
is $\rho_k(0) = \eta_k(0) = 0$ for $k=1,\ldots,N$.}
Figures \ref{fig:p1} and \ref{fig:p3} depict, at selected times,
solutions of $\bE_k$ and $\rho_k$ when $\sigma_*=3$ and $\sigma_*=5$, respectively. Figures \ref{fig:p2} and  \ref{fig:p4} depict solutions for $\mathbb{V}_k$ and  $\eta_k$ under similar condition conditions.
\itc{The dynamics of the stochastic model and the corresponding deterministic system resemble a nonlinear wave propagation problem where the information enters the system on the left (stage $k=0$) and then moves through all stages. The leading front corresponds to the information entering at time $t=0$, and the trailing front emerges after $t>30$ because the input is stopped in \eqref{input1} at time $t=30$.}

The behavior of $\bE_k$ in stochastic simulations is reproduced very well by the deterministic model \eqref{eq:rho1-eta1-closure}-\eqref{eq:rhok-etak-closure}. Both leading and trailing front propagation are captured correctly
for $\bE_k$.  In particular, the locations of both fronts are captured very accurately.  The trailing front is slightly more diffusive in stochastic simulations, which is common in many stochastic lattice models and their deterministic counterparts. In general, stochastic lattice models tend to be more diffusive compared to deterministic closure models due to the inherent
randomness of stochastic dynamics. (See for example the discussion about asymmetric simple exclusion process (ASEP) models and noisy PDEs in \cite{helbing2001,css00}).
Because the trailing edges of the waves are much sharper than the leading fronts, the diffusive nature of the stochastic simulations tends to be more dominant there. 
The behavior of the variance $\mathbb{V}_k$ is also well-reproduced by the deterministic model, although the discrepancies tend to be larger than those of the mean $\mathbb{E}_k$ \itc{even for shorter times
(this is particularly evident at time $t=10, 20$ in Figures \ref{fig:p2} and \ref{fig:p4}).}
Nonetheless, the discrepancies in $\eta_k$ do not significantly affect the behavior of $\rho_k$ in this model.
\begin{figure} \centering
\includegraphics[scale=0.45]{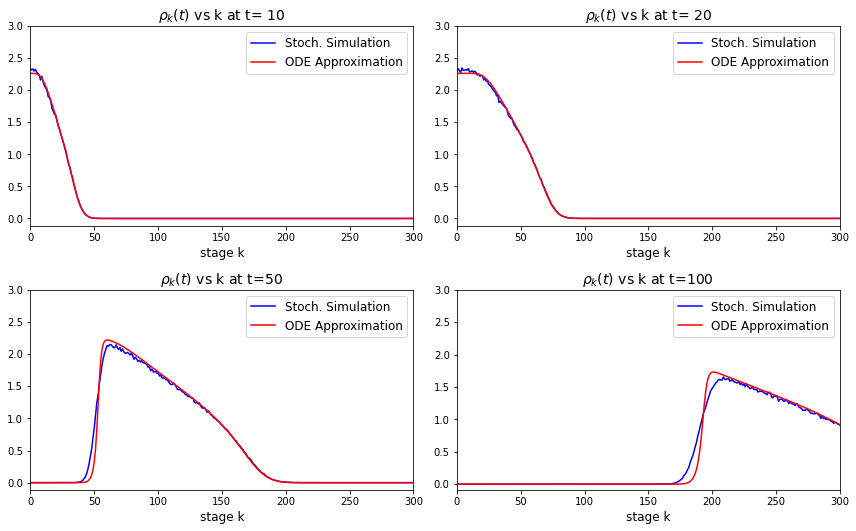}
\caption{$\bE_k$ and $\rho_k$ vs $k$ with $c=10$, $\sigma_*=3$ and piece-wise $c_0(t)$, for $300$ stages and times $t=10,20,50,100$ }
\label{fig:p1}
\end{figure} 
\begin{figure} \centering
\includegraphics[scale=0.45]{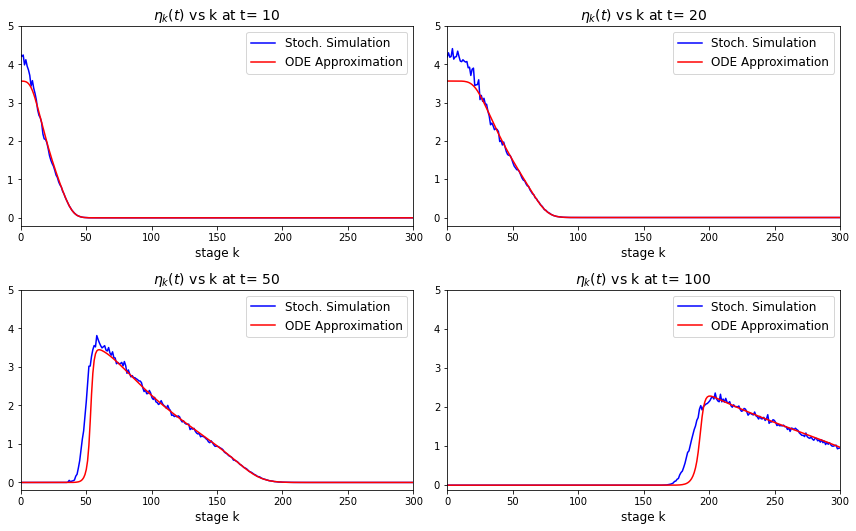}
\caption{$\mathbb{V}_k$ and $\eta_k$ vs $k$ with $c=10$, $\sigma_*=3$ and piece-wise $c_0(t)$, for $300$ stages and times $t=10,20,50,100$ }
\label{fig:p2}
\end{figure} 
\begin{figure} \centering
\includegraphics[scale=0.45]{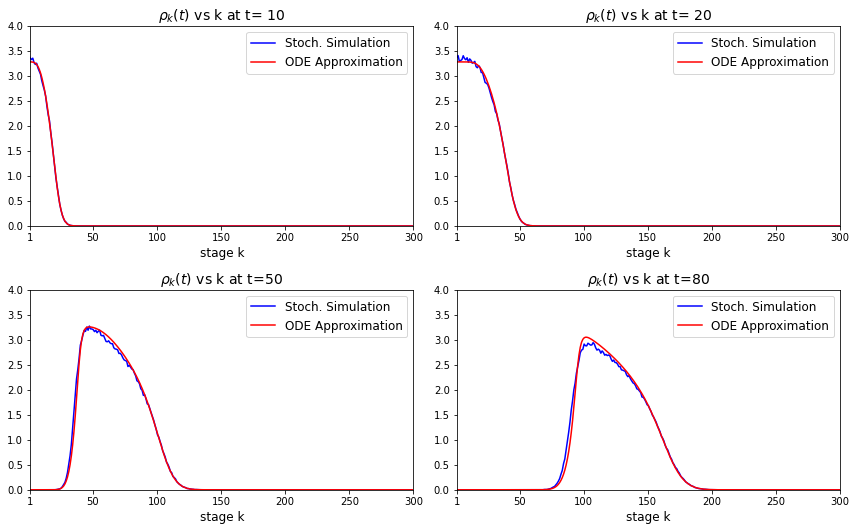}
\caption{$\bE_k$ and $\rho_k$ with $c=10$, $\sigma_*=5$ and piece-wise $c_0(t)$, for $300$ stages and times $t=10,20,50,80$ }
\label{fig:p3}
\end{figure} 
\begin{figure} \centering
\includegraphics[scale=0.45]{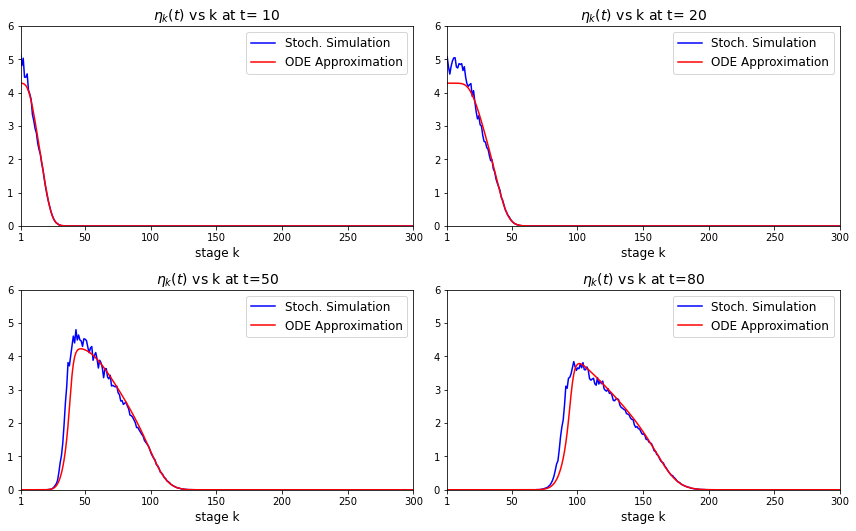}
\caption{$\mathbb{V}_k$ and $\eta_k$ vs $k$ with $c=10$, $\sigma_*=5$ and piece-wise $c_0(t)$, for $300$ stages and times $t=10,20,50,80$ }
\label{fig:p4}
\end{figure} 

For $\sigma_*=3$ and $\sigma_*=5$, both systems spend a considerable amount of time in the throttling regime, i.e., $\sigma_k(t) < \sigma_*$.  Moreover, the speed of propagation is slower for $\sigma_*=5$ compared to $\sigma_*=3$, in both the stochastic and deterministic simulations. This behavior is to expected since information is processed at a slower rate for $\sigma_*=5$.  In particular, $v(x, \sigma_*=5) < v(x, \sigma_*=3)$ for $x=1,2,3,4$.

\subsection{Piecewise-Constant Input Rate and Fluctuating Throttling Parameter}
\label{num3}

We next focus on numerical simulations of the stochastic 
and corresponding deterministic models with stage-dependent $\sigma_*^{(k)}$.
For each stage, $\sigma^{(k)}_*$ is generated from an i.i.d. discrete distribution
with $\bE[\sigma_*]=5$.
\itc{However, once generated, $\sigma^{(k)}_*$ is kept constant in all Monte-Carlo simulations.}
We then can compare the results of these simulations against those using constant $\sigma_*=5$ presented earlier in  Figures \ref{fig:p3} and \ref{fig:p4} of Section \ref{num1}.
Other parameters in the model are identical to those considered in Section \ref{num1}. In particular, the input rate is given by \eqref{input1}, and $c=10$.
We consider two different distributions for the generation of $\sigma^{(k)}_*$.
\itc{The main goal here is to understand how fluctuations of 
$\sigma^{(k)}_*$ affect the speed of propagation.}
The two samples of $\sigma^{(k)}_*$ are generated from  distributions
\begin{equation}
\label{random_sig3}
\Pr[\sigma_*=4] = 0.3, \quad 
\Pr[\sigma_*=5] = 0.4, \quad
\Pr[\sigma_*=6] = 0.3.
\end{equation}
and 
\begin{equation}
\label{random_sig4}
\Pr[\sigma_*=1] = 0.2, \quad 
\Pr[\sigma_*=2] = 0.2, \quad
\Pr[\sigma_*=6] = 0.2, \quad
\Pr[\sigma_*=8] = 0.4,
\end{equation}
respectively.
\itc{These two samples correspond to two different models which are studied separately.}
For the distribution in \eqref{random_sig3} the variance is $0.6$, and the skewness is identically zero.  For the distribution in \eqref{random_sig4} the variance is $8.8$ and the skewness is $-0.2758$.
\itc{We would like to emphasize that after the value $\sigma^{(k)}_*$ is generated (independently for each stage $k$) from one of the distributions above, it is kept constant in simulations. Therefore, effectively, $\sigma^{(k)}_*$ is not stochastic and does not change for different Monte-Carlo simulations. The Monte-Carlo averaging is performed only with respect to different paths of the stochastic system while keeping a particular sample of $\sigma^{(k)}_*$ fixed.}

\begin{figure} \centering
\includegraphics[scale=0.45]{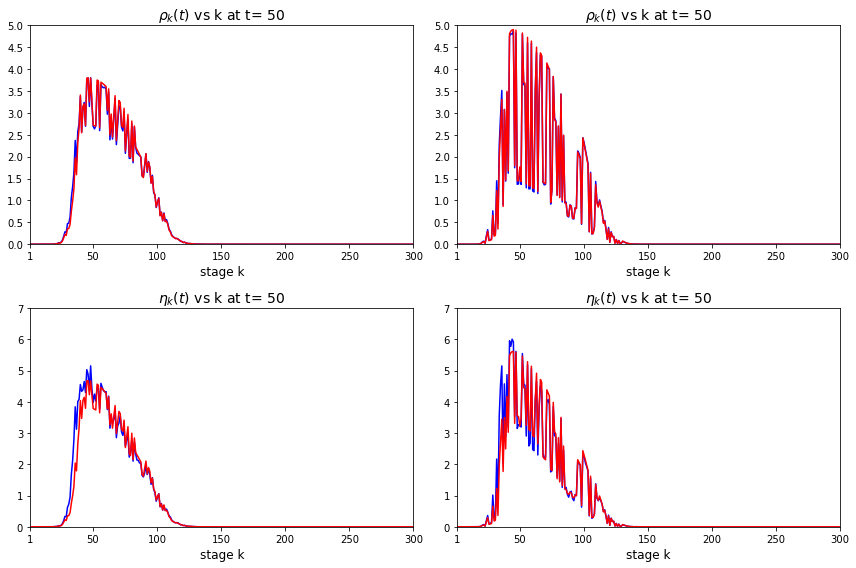}
\caption{Left: $\bE_k / \rho_k(t)$ and $\mathbb{V}_k / \eta_k(t)$ for $t=50$ and fluctuating $\sigma^{(k)}_*$ generated according to \eqref{random_sig3}. Right: $\bE_k / \rho_k(t)$ and $\mathbb{V}_k / \eta_k(t)$ for $t=50$ and fluctuating $\sigma^{(k)}_*$ generated according to \eqref{random_sig4}.}
\label{fig:p12}
\end{figure} 
The deterministic model reproduces well the behavior of the stochastic model using fluctuating $\sigma^{(k)}_*$ generated according to either
\eqref{random_sig3} or \eqref{random_sig4}.
The overall performance of the deterministic model 
is not affected by the \itc{fluctuations of $\sigma^{(k)}_*$.}
As expected, larger fluctuations in $\sigma^{(k)}_*$ yield larger  variations in $\bE_k$ and $\mathbb{V}_k$ from one stage to another. 
\itc{Since the empirical mean 
$\overline{\sigma_*} = N^{-1}\sum_{k=1}^N \sigma^{(k)}_* \approx 5$ in both cases, we can compare simulations with fluctuating $\sigma^{(k)}_*$ and numerical results  
for constant $\sigma^{(k)}_*=5$ (Figures \ref{fig:p3} and \ref{fig:p4} of Section \ref{num1}).
In particular, the speed of propagation of the leading front is not affected by fluctuations of $\sigma^{(k)}_*$.  
The results indicate that deterministic simulations with 
$\sigma^{(k)}_* = \overline{\sigma_*}$ should adequately reproduce major large-scale quantities of interest.}

\section{Conclusions}
\label{sec6}
In this paper, we have proposed and investigated a spatially-extended version of an M/M/s queue that is used to model the flow of information through the stages of a computer processor as a stochastic process.  We have analyzed the long-term behavior of this model, found its stationary distribution, and developed a closure based on the negative binomial distribution in order to approximate the evolution of the mean and variance of the stochastic process by a coupled system of deterministic ODEs.  

Since the model considered here is asymmetric, the deterministic ODE for the mean and variance of the first stage decouples from the dynamics of other stages.  We show that this ODE is well-posed and possesses a physically meaningful, invariant domain that ensures the closure remains well-posed.  These results can be extended in a straight-forward manner to the ODEs that describe the evolution of the mean and variance at consecutive stages.  

Numerical experiments have been performed to compare the stochastic process and the deterministic ODE.  Even though the latter is generally an order of magnitude faster to simulate, the results of these experiments demonstrate good agreement in the mean and variance of two models over a range of input parameters, even when the thresholding parameter at each stage is chosen randomly.  Such agreement is in contrast to the standard mean-field model approximation, which has been shown to perform poorly \cite{daneshvar2021}.  Moreover, since the ODE system keeps track of both means and variances, it can be used to quantify uncertainty in the mean.

\itc{Although the model considered here is motivated by information processing in a supercomputer, there are implications 
for general M/M/s queues as well. In particular, our results indicate that averaged quantities for the non-equilibrium behavior of M/M/s queues can be successfully approximated by a deterministic model. In addition to the mean and variance, this deterministic model can also be used to compute the mean waiting time and the mean response time for each stage, $k$ (see \cite{daneshvar2021} for details). Therefore, our results can be successfully used to model other physical systems.}

In this paper, we have focused on information flow
within a single computational unit, but the stochastic processes model can be extended to describe other multi-stage production systems. 
For example, a more realistic model of information flow should include a multi-processor architecture and proper coupling between processors.
Such a model and its analysis will be the subject of future work.


%

\appendix

\section{Appendix}

\begin{proposition} For any $\rho \geq 0$ and $\s \geq 1$,
\label{prop:detailed_calc}
        \begin{equation}
      \sum_{i=0}^{\sigma_*-1}(\rho-i+1)(\sigma_*-i) \frac{\rho^i}{i!} 
      = \sigma_* \sum_{i=0}^{\sigma_*} \frac{\rho^i}{i!} 
    \end{equation}
\end{proposition}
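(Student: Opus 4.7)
The plan is to induct on $\sigma_*$. Denote the two sides by
$$S(n) := \sum_{i=0}^{n-1}(\rho-i+1)(n-i)\,\frac{\rho^i}{i!} \quand T(n) := n\sum_{i=0}^{n}\frac{\rho^i}{i!}.$$
The base case $n = 1$ is immediate: the only term on the left is $(\rho+1)\cdot 1 \cdot 1 = 1+\rho$, and the right side equals $1 \cdot (1+\rho)$.

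For the inductive step, the strategy is to compare the forward differences $S(n+1)-S(n)$ and $T(n+1)-T(n)$. A direct reindexing gives
$$S(n+1) - S(n) = \sum_{i=0}^{n}(\rho-i+1)\,\frac{\rho^i}{i!},$$
since the added $(n+1-i)$ versus $(n-i)$ contributes exactly one extra copy of $(\rho-i+1)\rho^i/i!$ for each $i$, and the new $i=n$ term has $(n+1-n)=1$. On the other side,
$$T(n+1) - T(n) = \sum_{i=0}^{n}\frac{\rho^i}{i!} + \frac{\rho^{n+1}}{n!}.$$
Subtracting, the induction reduces to the lemma
$$\sum_{i=0}^{n}(\rho - i)\,\frac{\rho^i}{i!} = \frac{\rho^{n+1}}{n!}. \qquad (\star)$$

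Identity $(\star)$ is an exact telescoping sum, which is the only real computation in the proof. For $i \geq 1$ one has $(\rho - i)\rho^i/i! = \rho^{i+1}/i! - \rho^{i}/(i-1)!$, and the $i=0$ term contributes $\rho = \rho^1/0!$. Setting $u_i := \rho^{i+1}/i!$, the sum becomes $u_0 + \sum_{i=1}^{n}(u_i - u_{i-1}) = u_n = \rho^{n+1}/n!$, which is exactly $(\star)$.

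Since the computation is purely algebraic and the telescoping is transparent once $(\star)$ is isolated, I do not expect any real obstacle; the only place to be careful is the bookkeeping when expanding $S(n+1)-S(n)$, where the contribution of the added upper index $i=n$ must be combined correctly with the $(n+1-i)-(n-i)=1$ increment at the existing indices.
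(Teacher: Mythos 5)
Your proof is correct, but it takes a genuinely different route from the paper. The paper's proof is a single direct calculation: it expands the product $(\rho-i+1)(\sigma_*-i)$ into six separate sums ($\rho\sigma_*$, $\rho i$, $\sigma_* i$, $i^2$, $\sigma_*$, and $i$ times $\rho^i/i!$), shifts indices in each, and recombines them term by term until only $\sigma_*\sum_{i=0}^{\sigma_*}\rho^i/i!$ survives. You instead induct on $\sigma_*$ and reduce the inductive step to the single telescoping identity $\sum_{i=0}^{n}(\rho-i)\rho^i/i! = \rho^{n+1}/n!$, which you verify by writing each summand as $u_i - u_{i-1}$ with $u_i = \rho^{i+1}/i!$. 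Both your forward differences $S(n+1)-S(n)$ and $T(n+1)-T(n)$ are computed correctly (the coefficient increment $(n+1-i)-(n-i)=1$ at existing indices plus the new $i=n$ term, and the extra $\rho^{n+1}/n!$ from the enlarged sum on the right), and the base case checks out. What your approach buys is a cleaner isolation of the mechanism behind the identity — a single one-line telescoping lemma — at the cost of the inductive scaffolding; the paper's approach avoids induction entirely but requires tracking six sums and their cancellations, which is heavier bookkeeping for essentially the same amount of algebra.
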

\begin{proof}
The proof is a direct calculation. Let
\begin{align}
S= \sum_{i=0}^{\sigma_*-1}(\rho-i+1)(\sigma_*-i)\frac{\rho^i}{i!} 
&= I - II - III + IV + V - VI
\end{align}
where
\begin{subequations}
\begin{align}
    I &= \rho \s \sum_{i=0}^{\s-1} \frac{\rho^{i}}{i!}
    = \s \sum_{i=0}^{\s-1} \frac{\rho^{i+1}}{i!}
    \\
    II &= \rho \sum_{i=0}^{\s-1} i\frac{\rho^{i}}{i!} 
    = \rho \sum_{i=1}^{\s-1} \frac{\rho^{i}}{(i-1)!} 
    = \sum_{k=0}^{\s-2} \frac{\rho^{k+2}}{k!} 
    \\
    III &= \s \sum_{i=0}^{\s-1} i\frac{\rho^{i}}{i!} 
    = \s \sum_{i=1}^{\s-1} \frac{\rho^{i}}{(i-1)!} 
    = \s \sum_{k=0}^{\s-2} \frac{\rho^{k+1}}{k!} 
    \\
    IV &=  \sum_{i=0}^{\s-1} i^2\frac{\rho^{i}}{i!} 
    = \sum_{i=1}^{\s-1} i\frac{\rho^{i}}{(i-1)!} 
    = \sum_{k=0}^{\s-2} (k+1)\frac{\rho^{k
    +1}}{k!} 
    \\
    V &= \s \sum_{i=0}^{\s-1} \frac{\rho^{i}}{i!} 
    \\
    VI &= \sum_{i=0}^{\s-1} i\frac{\rho^{i}}{i!}
    = \sum_{i=1}^{\s-1} \frac{\rho^{i}}{(i-1)!}
    =\sum_{k=0}^{\s-2} \frac{\rho^{k+1}}{k!}
\end{align}
\end{subequations}
Combining these terms recovers $S$:
\begin{subequations}
\begin{align}
    VII &:= I - III
    = \s \frac{\rho^{\s}}{(\s-1)!}
    \\
    VIII &:= IV - VI 
    = \sum_{k=0}^{\s-2} k \frac{\rho^{k
    +1}}{k!}
    = \sum_{k=1}^{\s-2} \frac{\rho^{k
    +1}}{(k-1)!}
    = \sum_{j=0}^{\s-3} \frac{\rho^{j
    +2}}{j!}
    \\
    IX & := VIII - II
    = - \frac{\rho^{\s}}{(\s -2)!} \\
    X & := IX + VII 
    = \frac{\rho^{\s}}{(\s -1)!} 
    = \s \frac{\rho^{\s}}{\s!} \\
    S &= V + X 
    = \s \sum_{i=0}^{\s} \frac{\rho^{i}}{i!} 
\end{align}
\end{subequations}
\end{proof}

\end{document}